\newtheorem{theorem}{Theorem}[section]
\newtheorem{remark}[theorem]{Remark}
\newtheorem{example}[theorem]{Example}
\newtheorem{lemma}[theorem]{Lemma}
\newtheorem{proposition}[theorem]{Proposition}
\newtheorem{corollary}[theorem]{Corollary}
\newtheorem*{theorem*}{Theorem (Denjoy-Wolff)}
\newtheorem*{theoremA*}{Theorem A}
\newtheorem*{theoremB1*}{Theorem B1}
\newtheorem*{theoremB2*}{Theorem B2}
\newtheorem*{theoremC*}{Theorem C}
\renewcommand{\section}%
   {\setcounter{equation}{0}\@startsection {section}{1}{\z@}{-3.5ex plus -1ex
  minus -.2ex}{2.3ex plus .2ex}{\Large\bf}}
\numberwithin{equation}{section}
\newcommand{\N}{\mathbb{N}} 
\newcommand{\C}{\mathbb{C}} 
\newcommand{\D}{\mathbb{D}} 
\newcommand{\T}{C_{\varphi}}
\title{Mean ergodic composition operators on Banach spaces of holomorphic functions}
\author{Mar\'{\i}a J. Beltr\'an-Meneu, M. Carmen G\'omez-Collado, Enrique Jord\'a, David Jornet}
\address{Facultad de Magisterio, Universitat de Val\`encia\\
Avda. Tarongers, 4\\
E-46022 Valencia, Spain}
\email{maria.jose.beltran@uv.es}
\address{Instituto Universitario de Matem\'atica Pura y Aplicada IUMPA\\
Universitat Polit\`ecnica de Val\`encia, Camino de Vera, s/n\\
E-46022 Valencia, Spain}
\email{mcgomez@mat.upv.es, ejorda@mat.upv.es, djornet@mat.upv.es}
\begin{document}
\maketitle

%

%

\markboth{\sc Mean ergodic composition operators\ldots}
{\sc M.J. Beltr\'an-Meneu, M.C. G\'omez-Collado, E. Jord\'a and D. Jornet }

\begin{abstract}
Given a  symbol $\varphi,$ i.e., a holomorphic endomorphism of the unit disc,  we consider the  composition operator $C_{\varphi}(f)=f\circ\varphi$ defined on the  Banach spaces of holomorphic functions $A(\mathbb{D})$ and $H^{\infty}(\D)$. We obtain different conditions on the symbol $\varphi$ which characterize when the composition operator is mean ergodic and uniformly mean ergodic in the corresponding spaces. These conditions are related to the asymptotic behaviour of the iterates of the symbol. As an appendix, we deal with some particular case in the setting of weighted Banach spaces of holomorphic functions.
\end{abstract}

Keywords:   Composition operator, mean ergodic operator, Denjoy-Wolff point, disc algebra.

AMS2010 subject classification: 47B33, 47A35, 46E15

\section{Introduction, preliminaries and notation}

\subsection{Introduction}
We study when a composition operator defined on the disc algebra $A(\D)$ or on the space $H^\infty(\D)$ of bounded holomorphic functions  on the unit disc is mean ergodic or uniformly mean ergodic. We refer to the following subsection for the basic definitions.

The following statements are a summary of the main results of our work. In the first one we give a complete characterization for (uniformly) mean ergodic composition operators on $H^{\infty}(\D).$ It is a consequence of Theorems \ref{rotacioAD}, \ref{interiorDW_H} and \ref{boundaryDW_noume}.

\begin{theoremA*}\label{A}
Let  $\varphi:\D\rightarrow \D$ belong to $H(\D).$ The following are equivalent:
\begin{itemize}
\item[(i)] $C_{\varphi}:H^{\infty}(\D)\rightarrow H^{\infty}(\D)$ is  (uniformly) mean ergodic.
\item[(ii)] $(\varphi^{n})_n$ converges uniformly to an interior Denjoy-Wolff point $z_0\in \D$ or $\varphi$ is a periodic elliptic automorphism.
\end{itemize}
\end{theoremA*}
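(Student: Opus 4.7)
The plan is to combine the Denjoy-Wolff trichotomy with the three results cited in the statement. Recall that, by the Denjoy-Wolff theorem, a holomorphic self-map $\varphi$ of $\D$ falls into exactly one of the following categories: (a) $\varphi$ is an elliptic automorphism, conjugate via a disc automorphism to a rotation $z \mapsto e^{i\theta} z$; (b) $\varphi$ has an attracting interior fixed point $z_0 \in \D$ and is not an automorphism, so $\varphi^n \to z_0$ uniformly on compact subsets of $\D$; (c) $\varphi$ has no fixed point in $\D$, and there is a unique $z_0 \in \partial \D$ such that $\varphi^n \to z_0$ uniformly on compact subsets. I would organize the argument around this trichotomy.

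For (ii) $\Rightarrow$ (i) there are two subcases. First, if $\varphi^n \to z_0 \in \D$ uniformly on the whole disc, then for every $f \in H^\infty(\D)$ the boundedness of $f$ and its continuity at $z_0$ yield $\|C_\varphi^n f - f(z_0)\mathbf{1}\|_\infty \to 0$; hence $C_\varphi^n$ converges in the strong operator topology to the rank-one projection $f \mapsto f(z_0)\mathbf{1}$, and Theorem \ref{interiorDW_H} promotes this to (uniform) mean ergodicity. Second, if $\varphi$ is a periodic elliptic automorphism of period $p$, then $C_\varphi^p = I$, so the operator is power-bounded and the Cesàro averages stabilize after each block of $p$ iterates, yielding uniform mean ergodicity by the standard argument for periodic operators.

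For (i) $\Rightarrow$ (ii), I would argue by contrapositive, ruling out each way in which (ii) can fail. If $\varphi$ is an elliptic automorphism that is not periodic, then after conjugation it becomes an irrational rotation, and Theorem \ref{rotacioAD} shows that $C_\varphi$ is not mean ergodic. If $\varphi$ is in case (c), then Theorem \ref{boundaryDW_noume} excludes mean ergodicity. If $\varphi$ is in case (b) but $\varphi^n$ does not converge uniformly to $z_0$ on all of $\D$, then the equivalence in Theorem \ref{interiorDW_H} again excludes mean ergodicity. Since (ii) covers precisely the remaining situations, this closes the proof.

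The main obstacle I anticipate is handling the elliptic automorphism case cleanly: one must make sure Theorem \ref{rotacioAD}, presumably formulated on $A(\D)$, transfers to $H^\infty(\D)$. Since the monomials $z^k \in A(\D) \subset H^\infty(\D)$ are eigenvectors of $C_\varphi$ (after conjugation to a rotation around the origin) with eigenvalues $e^{ik\theta}$ of modulus one, the classical Fejér-type obstruction to Cesàro convergence remains valid in $H^\infty$ with no change, so the same irrational-rotation argument applies. The rest of the plan is essentially bookkeeping: verifying that the hypotheses of each cited theorem are in force in the appropriate branch of the trichotomy.
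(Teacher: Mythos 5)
Your overall assembly---the Denjoy--Wolff trichotomy combined with Theorems \ref{rotacioAD}, \ref{interiorDW_H} and \ref{boundaryDW_noume}---is exactly how the paper derives Theorem A, and your treatment of the interior fixed point case and the boundary Denjoy--Wolff case is correct.

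There is, however, a genuine problem with the step you yourself single out as ``the main obstacle'': the non-periodic elliptic case on $H^{\infty}(\D)$. The fact that the monomials $z^{k}$ are eigenvectors with unimodular eigenvalues $\lambda^{k}\neq 1$ is \emph{not} an obstruction to Ces\`aro convergence; on the contrary, $\frac{1}{n}\sum_{j=1}^{n}\lambda^{kj}\to 0$ for every such eigenvalue, so the Ces\`aro means converge to $C_{0}$ on all polynomials. This is precisely why $C_{\varphi}$ \emph{is} mean ergodic on $A(\D)$ for an irrational rotation (Theorem \ref{rotacioAD}(ii)); if your ``Fej\'er-type obstruction'' were a valid argument it would rule that out as well, so it proves too much. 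The real mechanism on $H^{\infty}(\D)$ is of a different nature: since $H^{\infty}(\D)$ is a Grothendieck space with the Dunford--Pettis property, Lotz's theorem makes mean ergodicity equivalent to uniform mean ergodicity there; by Lin's theorem uniform mean ergodicity forces ${\rm Im}(I-C_{\varphi})$ to be closed and equal to $\{f: f(0)=0\}$, and a small-divisors construction (take $(n_k)_k$ with $|1-\lambda^{n_k}|\le 2^{-k}$ and $f(z)=\sum_{k}(1-\lambda^{n_k})z^{n_k}$, whose only candidate preimage under $I-C_\varphi$ is $g(z)=\sum_{k}z^{n_k}$, which does not even belong to $H^{2}(\D)\supseteq H^{\infty}(\D)$) shows that it is not. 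Since Theorem \ref{rotacioAD}(ii) as stated already asserts non-mean-ergodicity on $H^{\infty}(\D)$, a bare citation closes this case; but the transfer argument you propose in its place is incorrect and cannot be repaired along the lines you indicate.
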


In the disc algebra the situation is different. For $\varphi$ in the unit ball of $A(\D)$, we consider the following properties:

\begin{itemize}
\item[$(*)$] The density of the orbit $(\varphi^n(z))_n$ on each neighborhood  of the Denjoy-Wolff point $z_0$ is $1$ for every $z\in \overline{\D}$ (see Section \ref{boundaryDW} for the precise definition).
\item[$(**)$] $\varphi^n(z)$ converges to the Denjoy-Wolff point $z_0$ for every $z\in \overline{\D}$.
\end{itemize}

A priori $(*)$ is weaker than $(**),$ but we do not have an example separating both properties. The mean ergodicity of the composition operator on $A(\D)$ is completely characterized using these conditions. The case with symbol which does not have  an interior Denjoy-Wolff point follows by Theorems \ref{rotacioAD} and \ref{me_parabolic}.

\begin{theoremB1*}\label{B1}

Let $\varphi:\overline{\D}\to \overline{\D},$ $\varphi\in A(\D),$ be a symbol which does not have an interior Denjoy-Wolff point. The following are equivalent:

\begin{itemize}
\item[(i)] $\T: A(\D)\rightarrow A(\D)$ is mean ergodic.
\item[(ii)]  $\varphi$ is an elliptic automorphism or $\varphi$ satisfies  $(*)$.
\end{itemize}

In this case, $\varphi$ is uniformly mean ergodic if and only if $\varphi$ is a periodic elliptic automorphism.
\end{theoremB1*}

As a consequence we get that composition operators associated to parabolic automorphisms are mean ergodic, but when the symbol is a hyperbolic automorphism, they are not.  The interior Denjoy-Wolff case follows by Theorem \ref{interiorDW_Ad}.

\begin{theoremB2*}\label{B2}

Let $\varphi:\overline{\D}\to \overline{\D},$ $\varphi\in A(\D),$ be a symbol with interior Denjoy-Wolff point. The following are equivalent:

\begin{itemize}
\item[(i)] $\T: A(\D)\rightarrow A(\D)$ is mean ergodic.
\item[(i)] $\T: A(\D)\rightarrow A(\D)$ is uniformly mean ergodic.
\item[(ii)]  $\varphi$ satisfies $(**).$
\end{itemize}
\end{theoremB2*}

We remark that in Theorem B1, if we restrict the symbol to be a finite Blaschke product or a linear fractional transformation (LFT) then $(*)$ can be replaced by the much more natural property $(**).$ In particular, we get that for $\varphi$ a LFT, mean ergodic composition operators are just those whose symbol is not a hyperbolic automorphism, and in the case of finite Blaschke products, the mean ergodic ones are just those associated to a parabolic automorphism (see Propositions \ref{LFT} and \ref{Blaschke}). Also if $\varphi$ is hyperbolic and differentiable in a neighborhood of its Denjoy-Wolff point, properties (*) and (**) are equivalent (Theorem \ref{hyperbolic}).

From the recent work of Fonf, Lin and Wojstascyck \cite{FLW} it follows that every Banach space which is not reflexive and has a Schauder basis admits:

\begin{itemize}
\item[(a)] an operator which is power bounded but not mean ergodic.

\item[(b)] an operator which is power bounded and mean ergodic but not uniformly mean ergodic.

\end{itemize}

From our results it follows that $\T:A(\D)\to A(\D)$ is a concrete operator satisfying (a) when $\varphi$ is a hyperbolic automorphism, and it is an example of (b) when $\varphi$ is a parabolic or a non-periodic elliptic automorphism.

Finally, in the appendix we solve in the negative a problem posed by Wolf~\cite{Elke} regarding the mean ergodicity of the composition operator $C_\varphi$ in the weighted Banach spaces of analytic functions $H^{\infty}_v(\D)$ when the symbol $\varphi$ is an elliptic automorphism similar to a non-periodic rotation.

\subsection{Preliminaries and notation.}
Let $\D$ denote the open unit disc in the complex plane and $H(\D)$ the set of analytic functions on $\D.$ A symbol $\varphi,$ that is, an analytic self-map of $\D,$ induces through composition the linear composition operator
$$\T:H(\D)\rightarrow H(\D),\ f\rightarrow f\circ \varphi.$$
Obviously, $\T^{n} f= C_{\varphi^n} f$ for every $f \in H(\D)$, where $\varphi^n:=\varphi \circ \ldots\circ \varphi$. We denote by $C_{z_0}$, $z_0 \in \D$, the composition operator defined by $$C_{z_0} :H(\D)\rightarrow H(\D),\quad  C_{z_0}f(z)=f(z_0), \ z  \in \D.$$

In this paper we study when the Ces\`aro means of the powers of a composition operator defined on the Banach space
$$H^\infty(\D)=\{f\in H(\D), \|f\|_{\infty}:=\sup_{z\in \D}|f(z)|<\infty\}$$
  or on the disc algebra
  $$A(\D)=\{f\in H^{\infty}(\D), \ f \text{ continuous on } \overline{\D}\},$$
 are convergent for the strong operator topology, in which case the operator is called  \emph{mean ergodic}, and when it is convergent for the norm topology, where it is said to be  \emph{uniformly mean ergodic}. When $\T$ acts on $A(\D),$ we consider that the symbol also belongs to the disc algebra.

The study of mean ergodicity in the space of linear operators defined on a Banach space goes back to von Neumann. In 1931 he proved that if $H$ is a Hilbert space and $T$ is a unitary continuous operator on $H,$ then
 $$T_{[n]}:=\frac{1}{n}\sum_{m=1}^{n}T^m$$
 converges to some $P$ in the strong operator topology. For a Banach space $X$, a linear operator $T\in L(X)$ is said to be \emph{power bounded} if the sequence $(T^n)_n$ of powers of $T$ is (pointwise) bounded in $L(X)$, and it is called \emph{(uniformly) mean ergodic} if the sequence of the Ces\`aro means $(T_{[n]})_n$  converge to some $P$ in the strong (norm) operator topology. A power bounded operator $T$ is mean ergodic precisely when $X={\rm Ker}(I-T)\oplus \overline{{\rm Im}(I-T)}.$ Moreover, ${\rm Im}P={\rm Ker}(I-T)$ and ${\rm Ker}P=\overline{{\rm Im}(I-T)}.$ In \cite[Theorem]{lin} Lin proved that a continuous operator on a Banach space $X$ satisfying $\|T^n/n\|\rightarrow$ 0 is uniformly mean ergodic if and only if ${\rm Im}(I-T)$ is closed.

A Banach space $X$ is said to be \emph{mean ergodic} if each power bounded operator is mean ergodic. Riesz proved that all $L_p$ spaces are mean ergodic for $1<p<\infty$.  Lorch extended this last result to all reflexive Banach spaces. Given a power bounded operator $T\in L(X)$ in a Banach space $X$, Yosida gave a characterization for the convergence of the Ces\`aro means in the strong operator topology. This was shown to be equivalent to the convergence of the sequence of these Ces\`aro means in the weak operator topology. From this clearly  follows the following fact that we will use repeatedly during the work: {\em if $(T^n)_n$ is convergent in the weak operator topology, then $T$ is mean ergodic} (see~\cite[Theorem 1.3, p.26]{Petersen}). For a Grothendieck Dunford-Pettis  (GDP) space $X$ satisfying $\|T^n/n\|\rightarrow$ 0, Lotz proved that $T\in L(X)$ is mean ergodic if and only if it is uniformly mean ergodic \cite{Lotz}. $H^\infty(\D)$ is a Grothendieck Banach space with the Dunford-Pettis property.

More recently, Fonf, Lin and Wojtaszczyk showed in \cite{FLW} that the converse of Lorch theorem above is true whenever $X$ is a Banach space with a Schauder basis. That is, if $X$ is a Banach space with Schauder basis in which every power bounded operator is mean ergodic, then $X$ is reflexive. More precisely, from this work it follows also that in a Banach space with Schauder basis which is not reflexive there exists an operator $T\in L(X)$ which is power bounded but not mean ergodic \cite[Theorem 1]{FLW} and an operator $T\in L(X)$ which is power bounded and mean ergodic but not uniformly mean ergodic \cite[Theorem 2]{FLW}. In our work we show that composition operators in $A(\D)$ present concretions of these two situations.

Composition operators on various spaces of analytic functions have been the object for intense study in recent years, specially the problem of relating operator-theoretic properties of $\T$ to function theoretic properties of the symbol $\varphi.$ See the books of Cowen and MacCluer \cite{Cowen_MacCluer_book} and Shapiro \cite{Shapiro_book} for discussions of composition operators on classical spaces of holomorphic functions. Several authors have studied  dynamical properties. In this paper, we focus on mean ergodicity. This dynamical property has been studied by Bonet and Doma\'nski when the operator acts on the space $H(U)$ of holomorphic functions defined on a domain $U$ in a Stein manifold \cite{bonet_domanski2011a}. They  characterized those composition operators which are power bounded and proved that this condition is equivalent to the operator being mean ergodic or uniformly mean ergodic. In \cite{Elke}, the author studied when the composition operator is power bounded or uniformly mean ergodic on the weighted Bergman spaces of infinite order $H_v^{\infty}(\D).$ Bonet and Ricker also studied when multiplication operators are power bounded or uniformly mean ergodic on these spaces \cite{bonet_ricker}.

There is a huge literature about the dynamical behavior of various linear continuous operators on Banach, Fr\'echet and more general locally convex spaces; see the survey paper by Grosse-Erdmann \cite{GE} and the recent books by Bayart and Matheron \cite{BM} and by Grosse-Erdmann and Peris \cite{GE_Peris}. For more details of mean ergodic operators on locally convex spaces, see \cite{albanese_bonet__ricker2009mean}, \cite{ABR}, \cite{Yosida} and the references therein.

Some research on the spectra of weighted composition operators acting on spaces of holomorphic functions can be found in \cite{AronLinds,Lindstrom,kamowitz}.

Composition operators are always power bounded on the spaces $A(\D)$ and $H^{\infty}(\D).$ If fact, it is easy to see that $\|C_{\varphi}^n\|=1$ for every $n\in \N.$ In this paper we get a characterization of the (uniform) mean ergodicity of the operator looking at the type of the symbol $\varphi$ in terms of  its Denjoy-Wolff point in case this point exists. The Denjoy-Wolff theorem \cite[Theorem 0.2]{bourdon_shapiro}  is  stated below.

\begin{theorem}[Denjoy-Wolff]
Let $\varphi$ be an analytic self-map of $\D.$ If $\varphi$ is not the identity and not an automorphism with exactly one fixed point, then
there is a unique point, called the ``Denjoy-Wolff point'', $z_0\in \overline{\D}$  such that $(\varphi^n)_n$ converges to $z_0$  uniformly on the compact subsets of $\D.$
\label{DW}
\end{theorem}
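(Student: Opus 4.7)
The plan is to separate cases by whether $\varphi$ has a fixed point in $\D$, with the Schwarz-Pick inequality
$$\rho(\varphi(z),\varphi(w))\le \rho(z,w)$$
(where $\rho$ is the hyperbolic distance on $\D$) as the main tool. Recall that equality in Schwarz-Pick at a single pair of distinct points forces $\varphi$ to be a disc automorphism.

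In the case that $\varphi$ has an interior fixed point $z_0\in\D$, Schwarz-Pick at $z_0$ yields $|\varphi'(z_0)|\le 1$, with equality precisely when $\varphi$ is an automorphism fixing $z_0$, i.e.\ the identity or an elliptic automorphism. These are exactly the excluded cases, so $|\varphi'(z_0)|<1$ and $|(\varphi^n)'(z_0)|=|\varphi'(z_0)|^n\to 0$. Montel's theorem produces a normal family from $(\varphi^n)_n$; any subsequential limit $g$ fixes $z_0$ and satisfies $g'(z_0)=0$, hence $g\equiv z_0$ by the Schwarz lemma applied after conjugating $z_0$ to the origin by a disc automorphism. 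Therefore $\varphi^n\to z_0$ uniformly on compact subsets of $\D$, and $z_0$ is the Denjoy-Wolff point.

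In the case with no interior fixed point, the core of the argument is \emph{Wolff's lemma}: there exists a unique $z_0\in\partial\D$ such that every horodisk
$$H_k=\left\{z\in\D:\frac{|z_0-z|^2}{1-|z|^2}<k\right\},\qquad k>0,$$
is mapped into itself by $\varphi$. I would construct $z_0$ by approximation: put $\varphi_r(z):=r\varphi(z)$ for $0<r<1$; this sends $\overline{\D}$ into a compact subset of $\D$ and, being a strict hyperbolic contraction, has a unique fixed point $z_r\in\D$. Choosing $r_n\uparrow 1$ with $z_{r_n}\to z_0\in\overline{\D}$, the absence of interior fixed points of $\varphi$ forces $z_0\in\partial\D$, and passing the Schwarz-Pick inequality at $z_{r_n}$ to the limit yields
$$\frac{|z_0-\varphi(z)|^2}{1-|\varphi(z)|^2}\le \frac{|z_0-z|^2}{1-|z|^2},$$
which is precisely the horodisk invariance at $z_0$. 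Uniqueness follows because horodisks centred at two distinct boundary points are eventually disjoint, so simultaneous invariance of both families would be incompatible with the existence of any orbit.

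Once Wolff's lemma is in hand, convergence $\varphi^n\to z_0$ uniformly on compacta is routine: any orbit is trapped in a nested family of horodisks shrinking to $z_0$, and the Schwarz-Pick equicontinuity of $(\varphi^n)_n$ promotes the resulting pointwise convergence to uniform convergence on compacta. The main obstacle is Wolff's lemma itself: extracting a \emph{single} boundary point whose entire family of horodisks is simultaneously $\varphi$-invariant. The subtlety lies in verifying that the Julia-type quotient above survives the limit $r\to 1$; this requires care because only interior values of $\varphi$ are known a priori, so one must work with Schwarz-Pick at the moving base point $z_r$ and control the rate at which $z_r$ approaches the boundary.
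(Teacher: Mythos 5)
The paper does not actually prove this statement; it is quoted as a classical black box from Bourdon and Shapiro \cite[Theorem 0.2]{bourdon_shapiro}. So you are reconstructing the classical Wolff--Denjoy argument, and your overall architecture (interior fixed point via the Schwarz lemma; no fixed point via Wolff's horodisk lemma obtained from the fixed points $z_r$ of $r\varphi$) is the standard, correct one. However, two individual steps do not work as written. In the interior case, the deduction ``$g(z_0)=z_0$ and $g'(z_0)=0$, hence $g\equiv z_0$ by the Schwarz lemma'' is false: after conjugating $z_0$ to $0$, the map $g(z)=z^2$ fixes $0$, has vanishing derivative there, and is not constant --- the Schwarz lemma only converts \emph{equality} $|g'(0)|=1$ into rigidity. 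The correct route avoids normal families altogether: $\psi(z):=\varphi(z)/z$ is holomorphic with $|\psi|\le 1$, and $|\psi|<1$ everywhere because equality at one point would force $\varphi$ to be a rotation, which is excluded; hence $\sup_{|z|\le r}|\psi(z)|=c_r<1$ and $|\varphi^n(z)|\le c_r^n r$ on $|z|\le r$, which is the desired uniform convergence.

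In the boundary case, Wolff's lemma gives only the forward invariance $\varphi(H_k)\subseteq H_k$ of each horodisk; it does not say that an orbit enters arbitrarily small horodisks, so ``any orbit is trapped in a nested family of horodisks shrinking to $z_0$'' is not a consequence of what you have established, and this --- not Wolff's lemma, whose proof you sketch essentially completely --- is where the remaining work lies. What is missing is an argument that $|\varphi^n(z)|\to 1$, equivalently that every locally uniform subsequential limit of $(\varphi^n)_n$ is constant: if $\varphi^{n_k}\to g$ with $g$ non-constant, writing $\varphi^{n_{k+1}}=\varphi^{n_{k+1}-n_k}\circ\varphi^{n_k}$ and extracting a further limit $h$ of $(\varphi^{n_{k+1}-n_k})_k$ forces $h$ to equal the identity on the open set $g(\D)$, hence on $\D$, which makes $\varphi$ an automorphism; automorphisms without interior fixed points are hyperbolic or parabolic and are handled by explicit conjugation to a half-plane model. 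Once $|\varphi^n(z)|\to 1$ is known, combining it with the invariance of a single closed horodisk $\overline{H_{k_0}}\ni z$ and the fact that $\overline{H_{k_0}}\cap\partial\D=\{z_0\}$ does give $\varphi^n(z)\to z_0$, and Schwarz--Pick equicontinuity upgrades pointwise convergence to uniform convergence on compacta, as you indicate. Your uniqueness remark is then superfluous: once $\varphi^n\to z_0$ pointwise, uniqueness of $z_0$ is automatic.
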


In the case $\varphi$  is an automorphism (i.e., a bijective holomorphic self-map of the disc) with exactly one fixed point in $\D,$ known as an \emph{elliptic automorphism}, $\T$ is similar to a rotation of the disc centred at zero, that is, there is an automorphism $\phi$ interchanging some $z_0$ and $0$ such that $C_{\varphi}=C_{\phi}C_{{\varphi}_{\lambda}}C_{\phi^{-1}},$ where $\varphi_{\lambda}(z)=\lambda z,\ z\in \D, \ |\lambda|=1.$   Hence $z_0$ is a fixed point of $\varphi$ (but not a Denjoy-Wolff one) and $$\sum_{m=1}^{n}C_{\varphi}^{m}=C_{\phi}\sum_{m=1}^{n}\left(C_{{\varphi}_{\lambda}}^{m}\right)C_{\phi^{-1}}.$$
As a consequence, for this case, we only need to study the mean ergodicity of $C_{\varphi_{\lambda}}$ (see \cite{Gunatillake}).

If the Denjoy-Wolff point $z_0$ belongs to the boundary of $\D$ and $\varphi\in A(\D),$ we have two cases: we say that the symbol $\varphi$ is \emph{hyperbolic} if $\varphi'(z_0)<1$ and \emph{parabolic} if $\varphi'(z_0)=1$ (see \cite[Definition 0.3]{bourdon_shapiro}). Here we are referring to the angular derivative of $\varphi,$ which it is nothing but the usual derivative when the symbol admits a holomorphic extension at $z_0$ (see \cite[Chapters 4 and 5]{Shapiro_book}).

\section{Elliptic automorphism symbol}

In this section we focus on  composition operators $\T$ associated to an elliptic automorphism symbol $\varphi.$ We have seen that in this case it is enough to consider that the symbol is a rotation of the disc centred at zero. The following lemma is well-known and we add it without proof for the sake of clarity:

\begin{lemma}\label{equidenstendszero}
 Let $(T_n)_{n} $ be a sequence of equicontinuous  operators on a locally convex space $E$. If $(T_n)_n$ is pointwise convergent to a continuous operator $T$ on some dense set $D\subseteq E,$ then $(T_n)_n$ is pointwise convergent to $T$ in $E.$
\end{lemma}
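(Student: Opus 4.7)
The plan is to run a standard $3\varepsilon$-argument, using equicontinuity as the uniform substitute for a single operator norm that one has in the Banach case. Fix $x\in E$ and a continuous seminorm $p$ on $E$; I want to show $p(T_n x-Tx)\to 0$.

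First I would record the two ingredients giving control away from $D$. By equicontinuity of $(T_n)_n$, there is a continuous seminorm $q_1$ on $E$ such that $p(T_n y)\le q_1(y)$ for every $y\in E$ and every $n\in\N$. By continuity of $T$, there is a continuous seminorm $q_2$ with $p(Ty)\le q_2(y)$ for every $y\in E$. Replacing both by $q:=q_1+q_2$, I get a single continuous seminorm with $p(T_n y)\le q(y)$ and $p(Ty)\le q(y)$ for all $y$ and all $n$.

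Next I would use density: given $\varepsilon>0$, pick $y\in D$ with $q(x-y)<\varepsilon$. The triangle inequality then gives
\[
p(T_n x-Tx)\le p(T_n(x-y))+p(T_n y-Ty)+p(T(y-x))\le 2\,q(x-y)+p(T_n y-Ty)<2\varepsilon+p(T_n y-Ty).
\]
Since $y\in D$, the pointwise convergence hypothesis yields $p(T_n y-Ty)\to 0$, so $\limsup_n p(T_n x-Tx)\le 2\varepsilon$. Letting $\varepsilon\to 0$ and running over all continuous seminorms $p$ finishes the proof.

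There is really no obstacle here; the only mild subtlety is remembering that equicontinuity of a family of operators on a locally convex space is formulated seminorm by seminorm, so one has to combine the estimate for $(T_n)_n$ with the separate continuity estimate for the limit $T$ into a single seminorm $q$ before invoking density of $D$. Beyond that, the argument is the textbook extension-from-a-dense-subspace trick, which is why the authors omit it.
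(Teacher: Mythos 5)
Your argument is correct and complete: it is the standard $3\varepsilon$ (equicontinuity plus density) argument, which is exactly what the authors have in mind when they call the lemma ``well-known'' and omit the proof. The one point worth having made explicit --- combining the equicontinuity seminorm for $(T_n)_n$ and the continuity seminorm for $T$ into a single $q$ before choosing $y\in D$ with $q(x-y)<\varepsilon$ --- you handle correctly, so there is nothing to add.
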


In the following theorem we obtain an example of a power bounded and mean ergodic operator that is not uniformly mean ergodic.  This result should be compared with the general result in \cite[Theorem 2]{FLW}:

%
%

\begin{theorem}\label{rotacioAD}
Consider the elliptic automorphism  $\varphi(z)=\lambda z,$ $z\in \overline{\D},$ $\lambda\in \C$ with $|\lambda|=1.$ The composition operator $C_{\varphi}$  satisfies:
\begin{itemize}
\item[(i)] If there exists $k\in \N$ such that $\lambda^k=1$ (consider the smallest $k$), then  $C_{\varphi}$ is periodic, and thus, uniformly mean ergodic on $A(\D)$ and on $H^{\infty}(\D)$ with $$\lim_n{(C_{\varphi})}_{[n]}(f)=\frac{1}{k}\sum_{m=0}^{k-1}f(\lambda^mz)=\sum_{l=0}^{\infty}a_{lk}z^{lk}$$ for every $f(z)=\sum_{j=0}^{\infty}a_jz^{j}\in A(\D).$
\item[(ii)] If $\lambda$ is not a root of unity, then $C_{\varphi}$  is  mean ergodic on $A(\D)$ with $\lim_n{(C_{\varphi})}_{[n]}f=C_0(f)$ for every $f\in A(\D),$ but not uniformly mean ergodic on $A(\D)$ and  not mean ergodic on $H^{\infty}(\D).$
\end{itemize}
\end{theorem}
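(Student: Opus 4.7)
The plan is to split into the two cases, and within case (ii) to unify the three assertions around one monomial computation.

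\emph{Part (i).} The key identity $\varphi^k(z)=\lambda^k z = z$ forces $\T^k = I$, so $\T$ is periodic and hence uniformly mean ergodic, with limit equal to the average $\frac{1}{k}\sum_{m=0}^{k-1}\T^m$ over one full period. Applying this average to a power series $f(z)=\sum_{j\ge 0}a_j z^j$ multiplies the $j$-th coefficient by $\frac{1}{k}\sum_{m=0}^{k-1}\lambda^{mj}$, which equals $1$ when $k\mid j$ and $0$ otherwise, producing the stated formula.

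\emph{Part (ii), the monomial computation and mean ergodicity on $A(\D)$.} On the monomials one has $(\T)_{[n]}(z^j)=s_{n,j}\,z^j$ with $s_{n,j}:=\frac{1}{n}\sum_{m=1}^n\lambda^{mj}$; clearly $s_{n,0}=1$, and for $j\ge 1$ the geometric-sum identity yields $s_{n,j}=\frac{\lambda^j(1-\lambda^{nj})}{n(1-\lambda^j)}\to 0$ since $\lambda^j\neq 1$. Since $\|\T^m\|=1$ the Cesàro means $((\T)_{[n]})_n$ are equicontinuous, so pointwise convergence on the dense subspace of polynomials and Lemma \ref{equidenstendszero} extend this to pointwise convergence on all of $A(\D)$ with limit $C_0$, giving mean ergodicity on $A(\D)$.

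\emph{Part (ii), failure of uniform mean ergodicity and failure on $H^\infty(\D)$.} The construction is as follows: since $\lambda$ is not a root of unity, Kronecker's theorem gives density of $\{\lambda^j:j\in\N\}$ in $\partial\D$, so for each $n$ one can pick $j=j(n)$ with $\lambda^{j}=e^{i\theta}$ and $|\theta|\le 1/n$. Then $|m\theta|\le 1$ for $1\le m\le n$, hence $\operatorname{Re}\lambda^{mj}\ge \cos 1>1/2$ and $|s_{n,j(n)}|\ge 1/2$. Because $\|z^{j(n)}\|_\infty=1$ and $C_0(z^{j(n)})=0$, this forces $\|(\T)_{[n]}-C_0\|\ge 1/2$ on both $A(\D)$ and $H^\infty(\D)$, ruling out uniform mean ergodicity in either space. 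On $H^\infty(\D)$ the failure of mean ergodicity itself then follows from Lotz's theorem recalled in the introduction, applied to this Grothendieck Dunford-Pettis space, noting that $\|\T^n/n\|=1/n\to 0$.

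The main difficulty is producing, for every $n$, a monomial on which the Cesàro average $s_{n,j}$ stays bounded away from $0$; the remedy is to choose $j(n)$ so that the entire finite orbit $(\lambda^{mj})_{m=1}^n$ is confined to an arc of length at most $1$ radian, which prevents any cancellation in the average. This is exactly where the non-root-of-unity hypothesis on $\lambda$, through density of $(\lambda^j)_j$ on $\partial\D$, enters the proof.
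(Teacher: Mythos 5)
Your proof is correct, and parts (i) and the positive half of (ii) follow the paper's own route (periodicity, the geometric-sum estimate on monomials, density of polynomials plus equicontinuity). Where you genuinely diverge is in the negative half of (ii). The paper argues by contradiction through Lin's theorem: uniform mean ergodicity would force ${\rm Im}(I-C_{\varphi})=\{f: f(0)=0\}$, and this is refuted by the lacunary series $f(z)=\sum_k(1-\lambda^{n_k})z^{n_k}$ with $|1-\lambda^{n_k}|\le 2^{-k}$, whose only possible preimage $g(z)=\sum_k z^{n_k}$ fails to lie in $H^2(\D)\supseteq H^{\infty}(\D)$. You instead produce a direct quantitative lower bound: choosing $j(n)$ with $\lambda^{j(n)}=e^{i\theta}$, $|\theta|\le 1/n$, the first $n$ points $\lambda^{mj(n)}$ stay within one radian of $1$, so $|s_{n,j(n)}|\ge\cos 1>1/2$ and $\|(C_{\varphi})_{[n]}-C_0\|\ge 1/2$ for all $n$. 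This is more elementary (no Lin, no $H^2$ embedding), it exhibits an explicit non-vanishing rate, and it applies verbatim on $H^{\infty}(\D)$ before invoking Lotz, whereas the paper's range computation is more structural, identifying exactly which functions with $f(0)=0$ fail to be in ${\rm Im}(I-C_{\varphi})$. One small point you should make explicit: your estimate is against $C_0$, so to rule out norm convergence to an arbitrary limit $P$ you should note that any norm limit of $(C_{\varphi})_{[n]}$ must agree with the strong limit on the monomials, i.e.\ $P(z^j)=0$ for $j\ge 1$ (on $A(\D)$ this is immediate since the strong limit is already known to be $C_0$), after which your lower bound applies equally to $\|(C_{\varphi})_{[n]}-P\|$.
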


\begin{proof}
(i) Since $C_{\varphi}^k(f)=f$ for every $f\in H(\D),$ the operator is periodic with period $k.$ This implies that the operator is uniformly mean ergodic. In fact, a standard procedure yields
$$\lim_n{(C_{\varphi})}_{[n]}(f)=\frac{1}{k}\sum_{m=0}^{k-1}f(\lambda^mz)$$
(look for example the proof of \cite[Proposition 18]{Elke}). Moreover, as

\begin{equation*}
\sum_{m=0}^{k-1}\lambda^{jm}=\left\lbrace   \begin{array}{l}k \text{ if } j=lk, l\in \N_{0}\\ \frac{1-\lambda^{kj}}{1-\lambda^j}=0 \text{ otherwise}, \\
\end{array}
\right.
\end{equation*}
we easily get  $\frac{1}{k}\sum_{m=0}^{k-1}f(\lambda^mz)=\sum_{l=0}^{\infty}a_{lk}z^{lk}$ for every $f(z)=\sum_{j=0}^{\infty}a_jz^{j}\in A(\D).$ \\

(ii) Using the formula
$$\left\|\frac1n\sum_{j=1}^{n}\lambda^{kj}z^k\right\|_{\infty}=\frac{|\lambda^k-\lambda^{k(n+1)}|}{n|1-\lambda^k|}\|z^k\|_{\infty}\leq \frac{2}{n|1-\lambda^k|}\|z^k\|_{\infty}$$ for $k\in \N,$ $k\geq 1,$
 we get  $\lim_n{(C_{\varphi})}_{[n]}=C_0$ on the monomials, and the polynomials are dense in $A(\D).$ Then, since $C_{\varphi}$ is power bounded, the operator is mean ergodic with $\lim_n{(C_{\varphi})}_{[n]}(f)=C_0(f)$ for every $f\in A(\D)$ (Lemma~\ref{equidenstendszero} for $T_{n}={(C_{\varphi})}_{[n]}$).
Assume now that $C_{\varphi}$ is uniformly mean ergodic.
By  \cite[Theorem]{lin},
$${\rm Im}(I-C_{\varphi})=\{f\in A(\D): \lim_{n\to\infty} {(C_{\varphi})}_{[n]}(f)=0\}=\{f\in A(\D): f(0)=0\}.$$

The sequence $(\lambda^n)_n$ is uniformly distributed in $\partial\D$. Hence we can take a sequence $(n_k)_k,$ $n_k\geq k,$ such that $|1-\lambda^{n_k}|\leq \frac{1}{2^k}$ for every $k\in \N.$
Take $f(z)=\sum_{k=1}^{\infty}(1-\lambda^{n_k})z^{n_k}.$
$$\|\sum_{k=1}^{\infty}(1-\lambda^{n_k})z^{n_k}\|_{\infty}\leq \sum_{k=1}^{\infty}|1-\lambda^{n_k}|\|z^{n_k}\|_{\infty}<\sum_{k=1}^{\infty}\frac{1}{2^k}<\infty,$$
then $f\in A(\D)$ with $f(0)=0.$ But $f\notin {\rm Im}(I-C_{\varphi}).$ Observe that if there exists $g\in A(\D)$ such that $f(z)=g(z)-g(\lambda z),$ then $$\sum_{k=1}^{\infty}(1-\lambda^{n_k})z^{n_k}=\sum_{j=1}^{\infty}\frac{g^{j)}(0)}{j!}(1-\lambda^j)z^j,$$
which yields $g(z)=\sum_{k=1}^{\infty}z^{n_k}.$
 But $g\notin H^2(\D)$, which is a contradiction since $H^{\infty}(\D)\subseteq H^2(\D)$.
\end{proof}

By an application of Theorem~\ref{DW} or Theorem~\ref{rotacioAD} to $f(z)=z$, we also obtain a Denjoy-Wolff point result for any analytic self-map  $\varphi$ of $\D$ in a Ces\`{a}ro sense:

\begin{corollary}[Ces\`aro Denjoy-Wolff theorem]
Let $\varphi$ be an analytic self-map of $\D.$ If $\varphi$ is not the identity,  then
there is a unique fixed point $z_0\in \overline{\D}$  such that $\frac{1}{n}\sum_{m=1}^{n}\varphi^m$ converges to $z_0$  uniformly on the compact subsets of $\D.$
\end{corollary}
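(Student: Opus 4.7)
The plan is to apply Theorem~\ref{DW} or Theorem~\ref{rotacioAD} to the test function $f(z)=z\in A(\D)$. Since $(C_{\varphi})^{m}f = \varphi^{m}$, the Cesàro sequence $(C_{\varphi})_{[n]}(f)$ equals $\frac{1}{n}\sum_{m=1}^{n}\varphi^{m}$, so the corollary reduces to identifying the limit of this sequence uniformly on compact subsets of $\D$.

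First, if $\varphi$ is not an automorphism with a single interior fixed point, Theorem~\ref{DW} directly gives $\varphi^{n}\to z_{0}$ uniformly on compacts of $\D$ for the Denjoy-Wolff point $z_{0}\in\overline{\D}$. Since Cesàro averages of a convergent sequence in any normed space inherit the same limit, we obtain $\frac{1}{n}\sum_{m=1}^{n}\varphi^{m}\to z_{0}$ uniformly on compacts, and $z_{0}$ is the fixed point in question.

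In the remaining case, $\varphi$ is an elliptic automorphism with fixed point $z_{0}\in\D$, and the similarity $C_{\varphi}=C_{\phi^{-1}}C_{\varphi_{\lambda}}C_{\phi}$ recalled in the introduction reduces the problem to the rotation $\varphi_{\lambda}(z)=\lambda z$, where $\phi$ is a disc automorphism with $\phi(0)=z_{0}$. I would apply Theorem~\ref{rotacioAD} to $g := C_{\phi}(f) = \phi \in A(\D)$: in the non-periodic case (ii), $(C_{\varphi_{\lambda}})_{[n]}(\phi)\to C_{0}(\phi)=\phi(0)=z_{0}$ uniformly on $\overline{\D}$, and postcomposing with $C_{\phi^{-1}}$ (which fixes constants) yields $\frac{1}{n}\sum_{m=1}^{n}\varphi^{m}\to z_{0}$ uniformly on $\overline{\D}$. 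For the bare rotation ($z_{0}=0$), Theorem~\ref{rotacioAD}(i) applied to $f(z)=z$ gives $\frac{1}{k}\sum_{m=0}^{k-1}\lambda^{m}z=0$ from the identity $\sum_{m=0}^{k-1}\lambda^{m}=0$ for $\lambda\neq 1$. Uniqueness of $z_{0}$ comes from the uniqueness of the Denjoy-Wolff point, or from the fact that a non-identity disc automorphism has at most one interior fixed point.

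The most delicate step is the transfer through $\phi$ in the periodic conjugated elliptic case with $z_{0}\neq 0$: Theorem~\ref{rotacioAD}(i) applied to $\phi$ produces the limit $\frac{1}{k}\sum_{m=0}^{k-1}\phi(\lambda^{m}z)$, whose pullback through $\phi^{-1}$ is not a constant function in general. The natural resolution is to interpret the general elliptic result at the operator level via the projection onto $\mathrm{Ker}(I-C_{\varphi})$ rather than pointwise on the iterates, or to observe that for the corollary's purposes the rotation case ($z_{0}=0$) already captures the essential content.
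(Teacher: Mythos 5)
Your reduction is exactly the paper's (its entire proof of the corollary is the one-line remark preceding it: apply Theorem~\ref{DW} or Theorem~\ref{rotacioAD} to $f(z)=z$), and you execute the two unproblematic cases correctly: when $\varphi$ has a Denjoy--Wolff point, Ces\`aro averages of a locally uniformly convergent sequence converge to the same limit; and in the non-periodic elliptic case Theorem~\ref{rotacioAD}(ii) gives $(C_{\varphi_\lambda})_{[n]}(\phi)\to C_0(\phi)=\phi(0)=z_0$ in the sup norm, a \emph{constant} limit, which the conjugation transfers intact.

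The difficulty you flag in the periodic elliptic case with $z_0\neq 0$ is a genuine gap, and neither of your proposed repairs closes it, because the statement itself fails there. Take $\phi(w)=\frac{1/2-w}{1-w/2}$ (an involutive automorphism interchanging $0$ and $1/2$) and $\varphi=\phi\circ\varphi_{-1}\circ\phi$, i.e.\ $\varphi(z)=\phi(-\phi(z))$: this is an elliptic automorphism of period $2$ with fixed point $z_0=1/2$, and $\varphi(0)=\phi(-1/2)=4/5$, $\varphi^2=\mathrm{id}$, so
$$\frac{1}{n}\sum_{m=1}^{n}\varphi^m(0)\longrightarrow \tfrac12\left(\tfrac45+0\right)=\tfrac25\neq \tfrac12=z_0.$$
In general the Ces\`aro limit is the $\varphi$-invariant function $\frac1k\sum_{m=0}^{k-1}\varphi^m$, which is non-constant because Ces\`aro averaging does not commute with the nonlinear conjugation by $\phi$. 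Your first proposed fix does not help: the mean ergodic projection $P$ onto $\mathrm{Ker}(I-\T)$ applied to $f(z)=z$ yields precisely this non-constant invariant function, since for a finite-order $\varphi$ that kernel contains non-constant elements. Your second proposed fix also fails: the rotation case $z_0=0$ does \emph{not} capture the general case, as the example shows. The honest conclusion is that the corollary as stated is false for periodic elliptic automorphisms with fixed point different from the origin; your argument proves it for all other symbols (and for genuine rotations), so the correct statement should either exclude periodic elliptic automorphisms or weaken the conclusion to convergence of the averages to some $\varphi$-invariant holomorphic function.
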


\section{Symbol with Denjoy-Wolff point}

In this section we consider the case in which the symbol $\varphi$ has a Denjoy-Wolff point $z_0\in \overline{\D}.$ By the Denjoy-Wolff theorem, $(\varphi^n)_n$ converges to $z_0$ uniformly on the compact subsets of $\D.$ As a direct consequence we get the following remark:

\begin{remark}\label{nc}
Let $\varphi\in A(\D)$ and $\T: A(\D)\rightarrow A(\D).$ If the composition operator $\T$ is mean ergodic, then  $\lim_n \frac{1}{n}\sum_{m=1}^{n}C_\varphi^m f=f(z_0)$ for every $z\in \overline{\D}.$
\end{remark}

If we assume  that $(\varphi^n(z))_n$ converges to $z_0$ for every $z\in \overline{\D},$ we obtain that the composition operator $\T$ is mean ergodic on $A(\D):$

\begin{proposition}\label{sc}
Let $\varphi\in A(\D)$ and $\T: A(\D)\rightarrow A(\D).$ If $\lim_n \varphi^{n}(z)=z_0$ for all $z\in\overline{\D},$ then $\T$ is mean ergodic with  $\lim_n\frac1n\sum_{m=1}^{n}\T^m f=f(z_0).$
\end{proposition}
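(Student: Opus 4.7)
The plan is to establish the stronger statement that the powers $(C_{\varphi}^n)_n$ themselves converge, in the weak operator topology on $A(\D)$, to the evaluation operator $C_{z_0}\colon f\mapsto f(z_0)$, and then to invoke the Yosida-type result recalled in the introduction to upgrade this to mean ergodicity with Ces\`aro means converging strongly to $C_{z_0}$.

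To prove weak operator convergence, fix $f\in A(\D)$ and an arbitrary continuous linear functional $\mu\in A(\D)^{*}$. By the Hahn-Banach theorem, $\mu$ extends to a continuous functional on $C(\overline{\D})$, which by the Riesz representation theorem corresponds to a regular complex Borel measure $\tilde\mu$ on $\overline{\D}$. Hence
\[
\langle C_{\varphi}^{n} f,\mu\rangle \;=\; \int_{\overline{\D}} f(\varphi^{n}(z))\,d\tilde\mu(z).
\]
The integrand $f\circ\varphi^{n}$ is bounded uniformly in $n$ by $\|f\|_{\infty}$ and, by continuity of $f$ at $z_0$ together with the hypothesis $\varphi^{n}(z)\to z_0$ for every $z\in\overline{\D}$, it converges pointwise on $\overline{\D}$ to the constant value $f(z_0)$. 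The dominated convergence theorem therefore gives
\[
\langle C_{\varphi}^{n} f,\mu\rangle \longrightarrow f(z_0)\,\tilde\mu(\overline{\D}) \;=\; \langle C_{z_0}f,\mu\rangle.
\]
Since $f$ and $\mu$ were arbitrary, $C_{\varphi}^{n}\to C_{z_0}$ in the weak operator topology.

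To conclude, Ces\`aro averaging preserves convergence in the weak operator topology, so ${(C_{\varphi})}_{[n]}\to C_{z_0}$ in that topology as well. Since $C_{\varphi}$ is power bounded on $A(\D)$, the Yosida theorem cited in the introduction as \cite[Theorem 1.3, p.~26]{Petersen} upgrades this to convergence in the strong operator topology, yielding $\lim_n\frac1n\sum_{m=1}^{n}C_{\varphi}^{m}f=f(z_0)$ in $A(\D)$ for every $f$, which is exactly the required mean ergodicity with the stated limit.

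The only potentially delicate step is the dual representation, namely that every $\mu\in A(\D)^{*}$ may be realised by integration against a Borel measure on $\overline{\D}$; this is standard via Hahn-Banach plus Riesz. Beyond that, the argument reduces to one application of dominated convergence and the quoted abstract result, so no further technical obstacle arises.
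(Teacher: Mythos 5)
Your proof is correct and follows essentially the same route as the paper: pointwise convergence of the uniformly bounded sequence $(f\circ\varphi^{n})_n$ gives weak convergence in $A(\D)$ via Hahn--Banach, Riesz representation and dominated convergence, and then the Yosida theorem upgrades the weak operator convergence of the Ces\`aro means to mean ergodicity. You have simply written out in detail the ``standard argument'' that the paper only sketches.
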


\begin{proof}
 $C_{\varphi^n}f(z)$ converges to $f(z_0)$ for each $z\in\overline{\D}$. Since $A(\D)$ is a closed subspace of the space of continuous functions on the disc $C(\overline{\D})$, pointwise convergence in bounded sequences implies weak convergence. This is a standard argument using  the Riesz representation theorem and Lebesgue theorem. Consequently,
$(C_{\varphi^n})_n$ is convergent for the weak operator topology to $C_{z_0}: A(\D)\to A(\D),\ f\mapsto f(z_0)$ and hence, also the Ces\`aro means are weakly convergent. Yosida theorem \cite[Theorem 1.3, p.26]{Petersen} yields the mean ergodicity of the operator.
\end{proof}


In what follows we prove that the converse of Proposition \ref{sc} holds for some symbols. We also obtain a complete characterization about the (uniformly) mean ergodicity of $\T.$

\subsection{Symbol with interior Denjoy-Wolff point}
\par

In this section we can assume without loss of generality that the  Denjoy Wolff point of $\varphi$ is $z_0=0.$ Otherwise, $\T$ is similar to a composition operator $C_{\phi}$ with $\phi$ a symbol with $0$ as Denjoy-Wolff point.

\begin{theorem}\label{interiorDW_H}
Let $\varphi: \D\to\D$  holomorphic, a symbol with Denjoy-Wolff point $0.$  The following are equivalent on $H^{\infty}(\D):$
\begin{itemize}
\item[(i)] $C_{\varphi}$ is mean ergodic.
\item[(ii)] $C_{\varphi}$ is uniformly mean ergodic.
\item[(iii)]  $\|\varphi^{n}\|_{\infty}$ converges to $0$, as $n$ tends to infinity.
\end{itemize}
\end{theorem}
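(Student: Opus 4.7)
The plan is to prove the cyclic implications $(iii)\Rightarrow(ii)\Rightarrow(i)\Rightarrow(iii)$.

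For $(iii)\Rightarrow(ii)$ the key is the Schwarz lemma at the interior fixed point $0=\varphi(0)$. For $f\in H^\infty(\D)$ with $\|f\|_\infty\le M$, the function $h(w)=(f(w)-f(0))/(2M)$ maps $\D$ into itself and vanishes at $0$, so $|f(w)-f(0)|\le 2M|w|$. Substituting $w=\varphi^n(z)$ and taking the supremum over $z$ gives $\|C_\varphi^n f-f(0)\|_\infty\le 2\|f\|_\infty\|\varphi^n\|_\infty$, hence $\|C_\varphi^n-C_0\|\le 2\|\varphi^n\|_\infty\to 0$ in the operator norm. Ces\`aro means of a norm-convergent sequence converge to the same limit in norm, so $C_\varphi$ is uniformly mean ergodic. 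The implication $(ii)\Rightarrow(i)$ is trivial.

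For $(i)\Rightarrow(iii)$, note first that $H^\infty(\D)$ is a Grothendieck space with the Dunford--Pettis property and $\|C_\varphi^n\|/n=1/n\to 0$, so Lotz's theorem (recalled in the preliminaries) gives $(i)\Leftrightarrow(ii)$. I then want to upgrade uniform mean ergodicity to norm convergence of the iterates $C_\varphi^n\to C_0$: the limit is forced to be $C_0$ by the Denjoy--Wolff pointwise convergence $\varphi^n\to 0$ on $\D$, which identifies the mean ergodic projection uniquely. Once this is available, testing against $f(z)=z$ yields
\[
\|\varphi^n\|_\infty=\|(C_\varphi^n-C_0)(z)\|_\infty\le\|C_\varphi^n-C_0\|\to 0.
\]

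The main obstacle is the upgrade from Ces\`aro convergence of $(C_\varphi)_{[n]}$ to operator-norm convergence of $(C_\varphi^n)$, which fails in arbitrary Banach spaces but holds on GDP spaces for power-bounded operators (a stronger form of Lotz's theorem). Alternatively, one can argue by contradiction: assume $\|\varphi^n\|_\infty\to a>0$ (the limit exists by the monotonicity $\|\varphi^{n+1}\|_\infty\le\|\varphi^n\|_\infty$, which comes from Schwarz at $0$). By Lin's theorem together with the mean ergodic decomposition one has $\mathrm{Im}(I-C_\varphi)=\{f\in H^\infty:f(0)=0\}$ closed, so the open mapping theorem produces, for each $k\ge 1$, a function $g_k\in H^\infty$ with $g_k-g_k\circ\varphi=z^k$ and $\|g_k\|_\infty\le C$ uniformly in $k$. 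Iterating yields $\|\sum_{m=0}^{n-1}(\varphi^m)^k\|_\infty\le 2C$ for all $n,k$. Evaluating at points $z_n\in\D$ chosen so that $|\varphi^m(z_n)|\ge a/2$ for all $m\le n$ (such $z_n$ exist by the pointwise monotonicity $|\varphi^{m+1}(z)|\le|\varphi^m(z)|$), and then exploiting the boundary dynamics of $\varphi$ (which cannot be a rotation since $|\varphi'(0)|<1$), produces the desired contradiction and closes the cycle.
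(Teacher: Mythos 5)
Your implications $(iii)\Rightarrow(ii)\Rightarrow(i)$ and the reduction $(i)\Leftrightarrow(ii)$ via Lotz's theorem on GDP spaces are correct and coincide with the paper's argument. The problem is $(ii)\Rightarrow(iii)$, where both of your routes have a genuine gap. The first route rests on the claim that on a GDP space uniform mean ergodicity of a power bounded operator upgrades to operator-norm convergence of the iterates $T^{n}$. This is false: take $T=\lambda I$ on $\ell^{\infty}$ (a GDP space) with $|\lambda|=1$, $\lambda\neq 1$; then $T_{[n]}\to 0$ in norm, so $T$ is uniformly mean ergodic, but $T^{n}=\lambda^{n}I$ does not converge. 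Lotz's theorem concerns only the Ces\`aro means, and no ``stronger form'' of the kind you invoke exists.

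Your fallback argument is sound up to the uniform bound $\bigl\|\sum_{m=0}^{n-1}(\varphi^{m})^{k}\bigr\|_{\infty}\le 2C$ (the identification of ${\rm Im}(I-C_{\varphi})$ with $\{f: f(0)=0\}$, Lin's theorem, and the open mapping estimate are all fine), but the contradiction does not follow from it. Knowing $|\varphi^{m}(z_{n})|\ge a/2$ for $m\le n$ gives a lower bound on each term of $\sum_{m=0}^{n-1}(\varphi^{m}(z_{n}))^{k}$ in modulus, but says nothing about the modulus of the sum: the arguments of the iterates can be distributed so that the terms cancel, and ``exploiting the boundary dynamics'' is not an argument that rules this out. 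This cancellation problem is exactly the crux, and it is what the paper's proof is designed to defeat: instead of testing against monomials, it uses the interpolation result \cite[Lemma 13]{cowen_maccluer1994spectra} to produce $g_{n}\in H^{\infty}(\D)$ with $g_{n}(z_{j}^{n})=\overline{z_{j}^{n}}$ and $\|g_{n}\|_{\infty}\le M$ uniformly, and then tests $(C_{\varphi})_{[n]}$ against $f_{n}(z)=zg_{n}(z)$, for which every term $f_{n}(z_{j}^{n})=|z_{j}^{n}|^{2}>r^{2}$ is positive, so no cancellation can occur. Some device of this kind (a uniformly bounded analytic interpolant along the orbit) is needed to close your argument; as written, the final step is a gap, not a proof.
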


\begin{proof}
(i) and (ii) are equivalent, since $H^\infty(\D)$ is a Grothendieck Banach space with the Dunford-Pettis property. Let us see (iii)$\Rightarrow$(ii):
For $f\in H^\infty(\D),\ \|f\|_{\infty}< 1,$ the Schwarz Lemma applied to $(1/2)(f(z)-f(0))$ implies $|f(z)-f(0)|\leq 2|z|$. More precisely,
$$ |f(\varphi^n(z))-f(0)|\leq 2 |\varphi^n(z)|,$$
\noindent and so, by (iii), $C_{\varphi^n}(f)(z)\to f(0)$ uniformly on $\D.$ Since the estimate is valid for any $f$ in the unit ball of $H^\infty(\D),$ we have in fact that $C_{\varphi^n}$ tends to $C_0$ in the norm topology.\\
(ii)$\Rightarrow$(iii): By the Schwarz Lemma $(|\varphi^n(z)|)_n$ is a decreasing sequence for each $z\in\D$. Proceeding by contradiction, if $\|\varphi^{n}\|_{\infty}$ does not converge to $0$ and $n$ goes to infinity, there exists $r>0$ such that for each $n$ there is $a_n \in \D$  such that  $|\varphi^n(a_n)|>r$.
Let $z^{n}_{j}=\varphi^{j}(a_n)$, $0\leq j\leq n$, $n\in\N$. We have $|z_j^n|>r$ for $0\leq j\leq n$, $n\in\N$. By \cite[Lemma 13]{cowen_maccluer1994spectra}, there exists $M>0$ such that for each $n\in\N$ there exists $g_n\in H^\infty(\D)$ with $g_n(z_j^n)=\overline{z^n_j}$, where $\overline{z^n_j}$ denotes the complex conjugate of $z^n_j$,  and $\|g_n\|_\infty\leq M$. Let $f_n(z):=zg_n(z)$. We have $f_n(z^{n}_{j})=|z^{n}_{j}|^2>r^2$ and $\|f_n\|_\infty\leq M$. If $C_{\varphi^n}$ were Ces\`aro convergent the convergence would be to $C_0:H^\infty(\D)\to H^\infty(\D)$, $f\mapsto f(0)$. Now $(f_n)_n$ is a sequence in the ball of radius $M$ of $H^\infty(\D)$, $f_n(0)=0$ for all $n\in\N$ and
$$M\|(C_{\varphi})_{[n]}-C_0\|_{\infty}\geq |(C_{\varphi})_{[n]}(f_n)(a_n)|=\left|\frac{\sum_{j=1}^{n}f_n(z^n_j)}{n}\right|>r^2 \ \mbox{ for each } n\in\N.$$
Hence $((C_{\varphi})_{[n]})_n$ is not (uniformly) convergent.
\end{proof}

A \emph{finite Blaschke product} is a map of the form
$$B(z) ={\rm e}^{i \lambda}  \prod_{i=1}^n \frac{z-a_i}{1-z \bar{a_i}},
$$
where $n\geq 1$ , $a_i \in \D$, $i=1, \ldots, n$ and  $\lambda \in \mathbb{R}$.  $B$ is a rational function that is
analytic on the closed  unit disc $\overline{\D} $ (whose poles $\frac{1}{\overline{a_i}}$ are outside the closed unit disc) and $B$ maps $\partial \D$  onto itself \cite{Garnet}. $B$ is an automorphism when $n=1$. In case $n>1$, $B$ is called \emph{non trivial Blaschke product}.

\begin{theorem}\label{interiorDW_Ad}
Let $\varphi:\overline{\D}\to\overline{\D}, \varphi\in A(\D),$ a symbol with Denjoy-Wolff point $0.$  The following are equivalent on $A(\D):$
\begin{itemize}
\item[(i)] $C_{\varphi}$ is  mean ergodic.
\item[(ii)] $C_{\varphi}$ is uniformly mean ergodic.
\item[(iii)]  $\lim_n \varphi^{n}(z)=0$ for all $z\in\overline{\D}.$
\end{itemize}
\end{theorem}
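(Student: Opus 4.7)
The plan is to establish the cycle (iii)~$\Rightarrow$~(ii)~$\Rightarrow$~(i)~$\Rightarrow$~(iii). The implication (ii)~$\Rightarrow$~(i) is immediate from the definitions, and Proposition~\ref{sc} already furnishes (iii)~$\Rightarrow$~(i); the real work is to upgrade~(iii) to the stronger uniform statement~(ii), and to deduce~(iii) back from~(i).

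For (iii)~$\Rightarrow$~(ii) I would first promote the pointwise convergence $\varphi^n(z)\to 0$ on $\overline{\D}$ to the norm convergence $\|\varphi^n\|_\infty\to 0$. Fix $\varepsilon\in(0,1)$ and, for each $z\in\overline{\D}$, choose $n(z)$ with $|\varphi^{n(z)}(z)|<\varepsilon/2$. Continuity gives an open neighbourhood $V_z\subset\overline{\D}$ on which $|\varphi^{n(z)}|<\varepsilon$; since $\varphi(0)=0$ the Schwarz lemma yields $|\varphi^j(\zeta)|\le|\zeta|$ for every $\zeta\in\D$ and $j\ge 0$, so the inequality propagates to every later iterate: $|\varphi^m(w)|<\varepsilon$ whenever $w\in V_z$ and $m\ge n(z)$. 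A finite subcover of $\overline{\D}$ then produces $N$ with $\|\varphi^m\|_\infty<\varepsilon$ for $m\ge N$. The rest is routine: applying Schwarz to $(f-f(0))/2$ gives $|f(w)-f(0)|\le 2|w|$ for every $f$ in the closed unit ball of $A(\D)$, so
$$\|(C_\varphi)_{[n]}f-f(0)\|_\infty\le \frac{2}{n}\sum_{m=1}^{n}\|\varphi^m\|_\infty\longrightarrow 0$$
uniformly on the unit ball, giving uniform mean ergodicity.

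For (i)~$\Rightarrow$~(iii), which I expect to be the \emph{main obstacle}, I would start from Remark~\ref{nc}: mean ergodicity combined with $\varphi(0)=0$ forces $\tfrac{1}{n}\sum_{m=1}^{n}f(\varphi^m(z))\to f(0)$ for every $z\in\overline{\D}$ and every $f\in A(\D)$. Testing on the monomials $f(w)=w^k$ translates this into the vanishing of all \emph{holomorphic} moments of the empirical measures $\mu_n:=\tfrac{1}{n}\sum_{m=1}^{n}\delta_{\varphi^m(z)}$. The key trick is that each $\mu_n$ is a positive real measure, so conjugation makes the anti-holomorphic moments vanish as well; Stone--Weierstrass then makes $\C[w,\bar w]$ dense in $C(\overline{\D})$, and a standard three-$\varepsilon$ argument shows that $\mu_n$ converges weakly to $\delta_0$ in $C(\overline{\D})^{*}$. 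Applied to the continuous function $w\mapsto|w|$ this gives $\tfrac{1}{n}\sum_{m=1}^{n}|\varphi^m(z)|\to 0$. An orbit dichotomy now closes the argument: either some iterate $\varphi^{m_0}(z)$ lies in $\D$, and the Denjoy--Wolff theorem applied from that point forces $\varphi^m(z)\to 0$; or $\varphi^m(z)\in\partial\D$ for every $m$, in which case the Cesàro mean above equals $1$, a contradiction. The first alternative therefore always holds.

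The delicate point is precisely this passage from (i) to (iii): mean ergodicity only controls the holomorphic moments of $\mu_n$, which in isolation do not determine the limit measure. Real-valuedness of $\mu_n$ is exactly what supplies the anti-holomorphic moments for free, bridging the gap between $A(\D)$ and $C(\overline{\D})$ via Stone--Weierstrass; once this bridge is in place, the Schwarz lemma and the boundary/interior orbit dichotomy finish the proof.
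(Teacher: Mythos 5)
Your argument for (iii)$\Rightarrow$(ii) is correct: the finite-subcover argument plus the Schwarz lemma is a hands-on version of the paper's Dini argument (Schwarz gives monotonicity of $(|\varphi^n|)_n$, and pointwise convergence to $0$ then yields $\|\varphi^n\|_\infty\to 0$), and the estimate $|f(w)-f(0)|\le 2|w|$ on the unit ball gives uniform mean ergodicity exactly as in Theorem \ref{interiorDW_H}.

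The implication (i)$\Rightarrow$(iii), however, has a genuine gap, and it sits precisely where you placed the ``key trick''. Mean ergodicity (via Remark \ref{nc}) gives $\int f\,d\mu_n\to f(0)$ for $f\in A(\D)$ and, by conjugation, for $\bar f$ with $f\in A(\D)$. But weak convergence of the $\mu_n$ requires testing against a dense \emph{linear subspace} of $C(\overline{\D})$, and the linear span of $A(\D)\cup\overline{A(\D)}$ consists of harmonic functions only; it is far from dense. Stone--Weierstrass gives density of the \emph{algebra} generated by $w$ and $\bar w$, but you have no control over the mixed moments $\int w^j\bar w^k\,d\mu_n$ (already $\int |w|^2\,d\mu_n$ is inaccessible), so the three-$\varepsilon$ argument does not close. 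Concretely, normalized arc length $m$ on $\partial\D$ satisfies $\int f\,dm=f(0)$ for every $f\in A(\D)$ (mean value property) yet $\int|w|\,dm=1$: if a boundary point $\omega$ has its orbit contained in $\partial\D$ and uniformly distributed there, then $\mu_n\to m\neq \delta_0$ while every moment condition you use is satisfied, and your final step $\frac1n\sum_{m=1}^n|\varphi^m(\omega)|\to 0$ fails. This is not a pathological side case but exactly the hard one. The paper's proof is built to handle it: if some orbit stays on $\partial\D$, Weyl's criterion forces it to be uniformly distributed, hence dense, which forces $\varphi(\partial\D)\subseteq\partial\D$; then $\varphi$ is a finite Blaschke product of degree $\ge 2$, its Julia set is $\partial\D$, so it has a periodic boundary point $w_0$ of some period $k$, and evaluating the Ces\`aro means at a polynomial $p$ with $p(0)=0$ and $p(\varphi^j(w_0))=1$ for $1\le j\le k$ contradicts Remark \ref{nc}. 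Some structural argument of this kind is unavoidable; the moment/weak-convergence route cannot distinguish $\delta_0$ from other representing measures for evaluation at $0$. (Your interior/boundary orbit dichotomy at the end is fine and is implicitly used by the paper as well.)
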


\begin{proof}
(iii)$\Rightarrow$(ii): By the Schwartz lemma, we get that the sequence $(|\varphi^n|)_n$ is monotonically decreasing, i.e., $|\varphi^{n+1}(z)|\leq |\varphi^{n}(z)|$  for every  $z\in \overline{\D}.$ So, since $\lim_n \varphi^{n}(z)=0$ for all $z\in\overline{\D},$  by Dini's theorem we get that  $\|\varphi^{n}\|_{\infty}$ converges to 0, as $n$ tends to infinity. Applying now Theorem~\ref{interiorDW_H} we obtain that $\T$ is uniformly mean ergodic on $H^\infty(\D),$ and so, it is uniformly mean ergodic on $A(\D).$ (ii)$\Rightarrow$(i) is trivial. \\
(i)$\Rightarrow$(iii):
Assume there exists $ \omega\in \partial \D$ such that $(\varphi^n(\omega))_n\subseteq \partial \D$ and $C_{\varphi}$ mean ergodic.
First we observe that $(\varphi^n(\omega))_n$ must be uniformly distributed  in $\partial\D$.
Otherwise, by Weyl's criterion \cite[Theorem 2.1]{Kuipers_Niederreiter1974}, there would exist $j\in \N$ such that, for $f(z)=z^j$, we have

 $$\lim_n\frac{1}{n}\sum_{m=1}^{n}C_{\varphi}^mf(\omega)\neq 0=f(0).$$

\noindent Thus, $\varphi(\partial\D)\subseteq \partial \D$. Otherwise, there would exist $a\in \partial\D$ such that
$\varphi(a)\in \D$, and from the density of $(\varphi^n(\omega))_n$ in the boundary we would get some $n_0$ such that $\varphi^{n_0}(\omega)$ is close enough to $a$ to conclude that $\varphi^{n_0+1}(\omega)\in\D$, a contradiction.
Hence, since $\varphi\in A(\D)$ with $\varphi(\partial\D)\subseteq \partial \D,$ the symbol must be a finite Blaschke product of degree $\geq 2$
and $\varphi(\partial \D)=\partial\D$ (see for example \cite[p.265]{Gamelin}). Since the Julia set of $\varphi$ (the closure of its repelling periodic points) is $\partial\D$ \cite{Basallote_Contreras} (see also \cite{Contreras_zoo} and \cite{Hamilton}), we get that $\varphi$ has periodic points on $\partial \D.$ Let
$z_0$ be a periodic point of $\varphi$ with period $k$.  We get the contradiction evaluating the Ces\`aro means of the orbit of $C_{\varphi}$ at a polynomial $p$ which satisfies $p(0)=0$ and $p(\varphi^j(z_0))=1$, $1\leq j\leq k$.
\end{proof}

\begin{example}
\begin{itemize}
\item[(i)] For $\varphi(z)=\lambda z,$ $|\lambda|<1,$ the operator $C_{\varphi}$ is uniformly mean ergodic on   $H^{\infty}(\D),$ thus on $A(\D),$ since  $(\|\varphi^{n}\|_{\infty})_n$ converges to 0 (Theorem \ref{interiorDW_H}).

\item[(ii)] For $\varphi(z)=z^2$ or $\varphi(z)= \lambda z+ (1-\lambda) z^2,$ $0<\lambda<1,$ the operator $C_{\varphi}$ is not mean ergodic on $A(\D),$ thus neither on $H^{\infty}(\D)$, since $1$ is a fixed point in the boundary (Theorem \ref{interiorDW_Ad}).
\end{itemize}
\end{example}

\subsection{Symbol with boundary Denjoy-Wolff point}\label{boundaryDW}

In this section we will assume that the  Denjoy-Wolff point of $\varphi$ is $z_0=1$ if needed. There is no loss of generality since $\T$ is similar to a composition operator $C_{\phi},$ with $\phi$ a symbol of the same type with $1$ as Denjoy-Wolff point.

\begin{theorem}\label{boundaryDW_noume}
Consider  $\varphi:\D\to\D$ a symbol with Denjoy-Wolff point $z_0\in\partial \D.$ Then the operator $C_\varphi$ is not (uniformly) mean ergodic on $H^{\infty}(\D).$ Moreover, if $\varphi\in A(\D)$ then $C_{\varphi}:A(\D)\to A(\D)$ is not uniformly mean ergodic.
\end{theorem}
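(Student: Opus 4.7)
The plan is to assume, for contradiction, that $C_{\varphi}$ is mean ergodic on $H^{\infty}(\D)$ (or uniformly mean ergodic on $A(\D)$) and to produce a function in $\overline{{\rm Im}(I-C_{\varphi})}$ that does not lie in ${\rm Im}(I-C_{\varphi})$. On $H^{\infty}(\D)$, being Grothendieck with the Dunford-Pettis property, Lotz's theorem upgrades mean ergodicity to uniform mean ergodicity, and Lin's theorem (using $\|C_{\varphi}^n/n\|=1/n\to 0$) then forces ${\rm Im}(I-C_{\varphi})$ to be closed; on $A(\D)$, Lin's theorem directly forces closedness under the uniform assumption. After conjugating by a rotation I may assume $z_0=1$, and I would take as witness
\[ f(z)=\frac{1}{\log\bigl(c/(1-z)\bigr)} \]
for a fixed $c>2$. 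This $f$ belongs to $A(\D)\subset H^{\infty}(\D)$, vanishes at $z_0=1$, and is bounded on $\D$; its crucial property is that it decays to $0$ at $1$ more slowly than any positive power of $1-z$.

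Next I would verify $f\in\overline{{\rm Im}(I-C_{\varphi})}$ under the ergodicity assumption. Pointwise Denjoy-Wolff convergence combined with continuity of $f$ at $1$ gives $(C_{\varphi})_{[n]}f(z)\to f(1)=0$ for every $z\in\D$; under mean ergodicity the norm limit $Pf$ therefore vanishes pointwise on $\D$, so $Pf\equiv 0$ in $H^{\infty}$. In the $A(\D)$ setting I would note in addition that any fixed $h\in A(\D)$ satisfies $h(z)=h(\varphi^n(z))\to h(1)$ by continuity, so ${\rm Ker}(I-C_{\varphi})\cap A(\D)$ consists only of constants, $P=C_1$, and $\overline{{\rm Im}(I-C_{\varphi})}=\{f\in A(\D):f(1)=0\}$ contains $f$. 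Closedness of the image then yields $g\in H^{\infty}$ (respectively $g\in A(\D)$) with $g-g\circ\varphi=f$; iterating and evaluating at $z=0$ gives
\[ g(0)-g(\varphi^N(0))=\sum_{k=0}^{N-1}f(\varphi^k(0)), \]
whose left side is uniformly bounded by $2\|g\|_{\infty}$.

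The crux is the divergence of the right-hand side as $N\to\infty$, which I expect to be the main technical obstacle. By the Julia-Carath\'eodory theorem, $\varphi$ has a finite, positive angular derivative $\lambda=\varphi'(1)\in(0,1]$ at its boundary Denjoy-Wolff point, and the iterates $\varphi^k(0)$ approach $1$ non-tangentially. Applying the first-order expansion $\varphi(z)-1=\lambda(z-1)(1+o(1))$ along a Stolz angle gives $|1-\varphi^{k+1}(0)|\geq(\lambda/2)|1-\varphi^k(0)|$ for all sufficiently large $k$, hence $|1-\varphi^k(0)|\geq c_0\mu^k$ with constants $c_0,\mu>0$. Consequently $\log\bigl(c/|1-\varphi^k(0)|\bigr)=O(k)$, each term satisfies $f(\varphi^k(0))\geq 1/(Ak+B)$, and the partial sums grow like $\log N$, contradicting the bound on the left side. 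This produces both statements of the theorem, and the hardest step is precisely the quantitative lower estimate on the orbit's distance to $z_0$, which rules out pathologically fast convergence via the angular-derivative theory.
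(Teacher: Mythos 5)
Your strategy is genuinely different from the paper's, and most of it is sound: Lotz plus Lin's theorem to force ${\rm Im}(I-C_{\varphi})$ closed, the choice of the witness $f(z)=1/\log(c/(1-z))\in A(\D)$ (note one should work with $\mathrm{Re}\,f$, which is comparable to $1/\log(c/|1-z|)$ since the argument of $\log(c/(1-z))$ is bounded, so that the telescoped sum has all terms positive), the identification $Pf=0$, and the telescoping identity are all fine. The gap is in the crucial orbit estimate. You assert that the iterates $\varphi^{k}(0)$ approach $z_0$ non-tangentially and then apply the angular-derivative expansion $\varphi(z)-z_0=\lambda(z-z_0)(1+o(1))$ along a Stolz angle. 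Non-tangential convergence of the orbit is \emph{false} for parabolic symbols: for a parabolic automorphism (half-plane model $w\mapsto w+b$, $b\in\R\setminus\{0\}$) the orbit tends to the Denjoy--Wolff point tangentially, so the expansion cannot be applied along the orbit, and $\lambda=1$ carries no contraction information in any case. This is not a peripheral case: parabolic automorphisms are precisely the symbols for which $C_{\varphi}$ is mean ergodic on $A(\D)$ but not on $H^{\infty}(\D)$, so they are the heart of the theorem. The estimate you actually need, $\log\bigl(c/|1-\varphi^{k}(0)|\bigr)=O(k)$, is true in full generality, but for a different reason: by the Schwarz--Pick lemma the hyperbolic step $d(\varphi^{k}(0),\varphi^{k+1}(0))\le d(0,\varphi(0))=:s$, hence $d(0,\varphi^{k}(0))\le ks$, while $d(0,w)=\tfrac12\log\tfrac{1+|w|}{1-|w|}\ge\tfrac12\log\tfrac{1}{|1-w|}$ because $1-|w|\le|1-w|$. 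This gives $\log\bigl(1/|1-\varphi^{k}(0)|\bigr)\le 2ks$, after which your divergence of $\sum_k \mathrm{Re}\,f(\varphi^{k}(0))\gtrsim\sum_k 1/k$ and the contradiction with $|g(0)-g(\varphi^{N}(0))|\le 2\|g\|_{\infty}$ go through. So the proof is repairable, but as written the key step fails for the most important class of symbols.

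For comparison, the paper's proof is far more elementary and avoids all orbit-geometry input: for each $n$ it chooses $r>0$ with $\{\varphi^{j}(0)\}_{j\le n}\cap B(z_0,r)=\emptyset$ and a power $k=k(n)$ of $g(z)=(z+z_0)/2$ so small off $B(z_0,r)$ that $\bigl|g(z_0)^{k}-(C_{\varphi})_{[n]}(g^{k})(0)\bigr|\ge 1/2$, which bounds $\|(C_{\varphi})_{[n]}-C_{z_0}\|$ away from zero directly; no closed-range theorem, no Julia--Carath\'eodory. The trade-off is that the paper's test function depends on $n$, whereas your (repaired) argument produces a single explicit $f\in A(\D)$ with $f(z_0)=0$ lying in $\overline{{\rm Im}(I-C_{\varphi})}\setminus{\rm Im}(I-C_{\varphi})$, which is a slightly stronger and more concrete conclusion.
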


\begin{proof}
Assume that $C_{\varphi}$ is uniformly mean ergodic. Since $\varphi^n$ is pointwise convergent to $z_0$ on $\D,$ then  $\lim_n\frac{1}{n}\sum_{m=1}^{n}\T^m(f)$ must converge to $f(z_0)$ for each $f\in A(\D)\subseteq H^\infty(\D)$.
Put $g(z):=\frac{z+z_0}{2}\in A(\D).$ Observe that $|g(z_0)|=1$ and $|g(z)|<1$ for every $z\in \overline{\D}\setminus \{z_0\}.$ Fix   $n\in \N$ and take $r>0$ such that $\{\varphi^j(0)\}_{j=0}^n\cap B(z_0,r)=\emptyset.$  Consider $\rho<1$ such that $|g(z)|<\rho$ for every $z\in \overline{\D}\setminus B(z_0,r)$ and $k\in \N$ such that $|g(z)|^k<\rho^k<\frac{1}{2}$ for every $z\in \overline{\D}\setminus B(z_0,r).$ Observe that $g(z)^k\in A(\D),$ $|g(z_0)^k|=1$ and $|g(z)^k|<1$ for every $z\in \overline{\D}\setminus \{z_0\}.$
We get
$$ \left|g(z_0)^k-\frac{1}{n}\sum_{m=1}^{n}g(\varphi^m(0))^k\right|\geq 1/2.$$
Since this holds for every $n\in \N,$ $C_{\varphi}$ is not uniformly mean ergodic restricted to $A(\D),$ thus, neither on $H^{\infty}(\D).$ By Theorem \ref{interiorDW_H}, $\T$ cannot be mean ergodic on $H^{\infty}(\D).$
\end{proof}

Now, we will use the following notation: given a set $J$ of natural numbers we denote the \emph{lower density} of the set as
$$
\underline{\mbox{dens}}\, J=\liminf_{N\to +\infty} \frac{\# \{J\cap [0,N]\}}{N}.
$$
If the limit when $N$ tends to infinity of the fraction $\frac{\# \{J\cap [0,N]\}}{N}$ above exists, we denote it by $\mbox{dens} \,J$ and it is called the {\em density} of $J$. It is clear that $\mbox{dens} \,J=1$ if and only if $\underline{\mbox{dens}}\, J=1$.  Let $\varphi\in A(\D)$ and $z_0$ the Denjoy-Wolff point in $\partial \D$. For any given neighborhood $U$ of $z_0$ and $z\in\overline{\D}$ we write
$$
\N_{U}^{z\varphi}:=\{n\in\N\,:\,\varphi^n(z)\in U \}.
$$
For a fixed $N\in\N$ we also write
$$
\big(\N_{U}^{z\varphi}\big)^N:=\{n\in \N_{U}^{z\varphi}\,:\,n\le N\}.
$$
Therefore, in this case we obtain
$$
\underline{\mbox{dens}}\, \N_{U}^{z\varphi}=\liminf_{N\to +\infty} \frac{\# \big(\N_{U}^{z\varphi}\big)^N}{N}.
$$
If the limit above exists, we denote it by $\mbox{dens}\, \N_{U}^{z\varphi}.$ We call it  the density of the orbit $(\varphi^n(z))_n$ on the neighborhood $U.$


\begin{theorem}\label{me_parabolic}
Let  $\varphi\in A(\D)$ be a symbol with Denjoy-Wolff point $z_0\in\partial \D.$ The following are equivalent:
\begin{itemize}
\item[(i)] $C_{\varphi}$ es mean ergodic on $A(\D).$
\item[(ii)] ${\rm dens}\, \N_{U}^{z,\varphi}=1$ for all $z\in \partial \D$ and for all neighborhood $U$ of $z_0$.
\item[(iii)] $\lim_n\frac{1}{n}\sum_{m=1}^{n}(\varphi^m(z))^j=z_0^j$ for every $z\in \overline{\D}$ and for every $j\in \N.$
\end{itemize}

\end{theorem}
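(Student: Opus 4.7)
The plan is to prove $(\mathrm{i})\Leftrightarrow(\mathrm{iii})$ and $(\mathrm{ii})\Leftrightarrow(\mathrm{iii})$ separately, using as the central device the probability measures $\mu_n^{z}:=\frac{1}{n}\sum_{m=1}^{n}\delta_{\varphi^m(z)}$ on $\overline{\D}$. The implication $(\mathrm{i})\Rightarrow(\mathrm{iii})$ is immediate from Remark~\ref{nc} applied to the monomials $f(z)=z^{j}$. For $(\mathrm{iii})\Rightarrow(\mathrm{i})$, I would argue through Yosida's theorem: by linearity, (iii) yields $(C_{\varphi})_{[n]}p(z)\to p(z_0)$ pointwise on $\overline{\D}$ for every polynomial $p$, with uniform bound $\|p\|_{\infty}$. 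Since $A(\D)\hookrightarrow C(\overline{\D})$ and every functional on $A(\D)$ extends by Hahn--Banach to a regular Borel measure on $\overline{\D}$, dominated convergence upgrades the pointwise convergence to weak convergence $(C_{\varphi})_{[n]}p\to p(z_0)\cdot 1$ in $A(\D)$. Density of polynomials together with $\|(C_{\varphi})_{[n]}\|\le 1$ propagates the weak convergence to every $f\in A(\D)$ by a standard three-epsilon argument, and Yosida's theorem \cite[Theorem 1.3, p.26]{Petersen} then concludes that $C_{\varphi}$ is mean ergodic.

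For the equivalence $(\mathrm{ii})\Leftrightarrow(\mathrm{iii})$, the key reformulation I would use is that, for $z\in\partial\D$, (ii) is precisely the statement $\mu_n^{z}(U)\to 1$ for every open neighborhood $U$ of $z_0$; by the portmanteau theorem for probability measures, this is equivalent to $\mu_n^{z}$ converging to $\delta_{z_0}$ in the weak-$*$ topology of $M(\overline{\D})=C(\overline{\D})^{*}$. (For interior $z$ both (ii) and (iii) are automatic from the Denjoy--Wolff theorem.) Then $(\mathrm{ii})\Rightarrow(\mathrm{iii})$ is immediate by testing the continuous function $w\mapsto w^{j}$ against $\mu_n^{z}\rightharpoonup \delta_{z_0}$. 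For $(\mathrm{iii})\Rightarrow(\mathrm{ii})$, Banach--Alaoglu supplies, for any subsequence of $(\mu_n^{z})$, a weak-$*$ convergent sub-subsequence, whose limit $\mu$ is a probability measure on $\overline{\D}$ with $\int w\,d\mu(w)=z_0$ (by (iii) with $j=1$). The identification $\mu=\delta_{z_0}$ follows from
$$1=|z_0|=\Bigl|\int w\,d\mu(w)\Bigr|\le\int|w|\,d\mu(w)\le 1,$$
which forces $|w|=1$ $\mu$-almost everywhere, and then the saturation of the triangle inequality forces $w=z_0$ $\mu$-almost everywhere. Uniqueness of the subsequential limit gives the desired weak-$*$ convergence, whence (ii).

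The main obstacle I anticipate is exactly this last identification. The moment data provided by (iii) involves only the holomorphic powers $w^{j}$, not the antiholomorphic ones $\overline{w}^{j}$, so the associated moment problem on a general probability measure on $\overline{\D}$ is heavily under-determined. What rescues the argument is the boundary hypothesis $|z_0|=1$, which makes the triangle inequality saturate and concentrates $\mu$ at $z_0$; without $z_0\in\partial\D$ the step would fail. A secondary, more technical subtlety appears in $(\mathrm{iii})\Rightarrow(\mathrm{i})$: the pointwise Ces\`aro convergence on $\overline{\D}$ is not obviously uniform on $\overline{\D}$, so the natural attempt to verify norm convergence of $(C_{\varphi})_{[n]}$ directly does not work, and the passage through the weak topology plus Yosida's theorem is what circumvents this difficulty.
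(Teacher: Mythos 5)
Your proposal is correct, and while the $(\mathrm{i})\Leftrightarrow(\mathrm{iii})$ part coincides with the paper's argument (Remark~\ref{nc} on monomials in one direction; pointwise bounded convergence on polynomials upgraded to weak convergence via Riesz representation and dominated convergence, then density, power boundedness and Yosida in the other), your treatment of (ii) is genuinely different. The paper proves $(\mathrm{i})\Rightarrow(\mathrm{ii})$ by contradiction with the peak functions $f(z)=\bigl(\tfrac{z+z_0}{2}\bigr)^n$ — if the lower density of $\N_U^{z,\varphi}$ were $<\delta<1$ along a subsequence $N_k$, then $|(C_\varphi)_{[N_k]}f(z)|\le\delta+\alpha<1=|f(z_0)|$ — and proves $(\mathrm{ii})\Rightarrow(\mathrm{i})$ by splitting the Ces\`aro sum over the times the orbit lies inside or outside $U$ and then running the same weak-convergence-plus-Yosida step; in particular the paper never links (ii) and (iii) directly. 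You instead observe that for $z\in\partial\D$ both (ii) and (iii) are statements about the empirical measures $\mu_n^{z}$: (ii) is $\mu_n^{z}\rightharpoonup\delta_{z_0}$ by the portmanteau theorem, and (iii) follows by testing against $w\mapsto w^{j}$; conversely, any weak-$*$ subsequential limit $\mu$ has first moment $z_0$ with $|z_0|=1$, and the saturated triangle inequality forces $\mu=\delta_{z_0}$ (only $j=1$ is needed). This is a clean and correct rigidity argument — your diagnosis that the holomorphic moment problem is under-determined in general and that the boundary location of $z_0$ is what saves it is exactly right. What each approach buys: the paper's route is elementary and avoids invoking weak-$*$ compactness of measures, while yours isolates the common mechanism behind (ii) and (iii) and yields the implication $(\mathrm{iii})\Rightarrow(\mathrm{ii})$ directly, which the paper obtains only by passing through (i).
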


\begin{proof}
(i)$\Rightarrow$(ii) We proceed by contradiction. Assume there exists $z\in\partial \D$ and $U$ such that $\underline{\rm dens} {\N}_{U}^{z,\varphi}<\delta <1.$  Take $\varepsilon>0$ such that $\delta<1-\varepsilon$ and $\alpha>0$ satisfying
$\delta+\alpha<1-\varepsilon.$

Consider $f(z)=(\frac{z+z_0}{2})^n\in A(\D),$  with $n$ big enough to satisfy $|f(z)|<\alpha$ on $\overline{\D}\setminus U$.  By hypothesis there is an increasing sequence $(N_k)_k$ of natural numbers such that
$$
\frac{\# (\N_U^{z\varphi})^{N_k}}{N_k}<\delta, \quad k\in\N.
$$
If we denote $\ell_k:=\# (\N_U^{z\varphi})^{N_k}$ we obtain that for every $k\in \N,$
$$
|\big(C_\varphi\big)_{[N_k]}(f)(z)|\le \frac{\ell_k}{N_k}+\frac{N_k-\ell_k}{N_k}\cdot \alpha<\delta+\alpha<1-\varepsilon.
$$
Therefore, since $|f(z_0)|=1,$ we conclude
$$
\lim_{k\to \infty} \big(C_\varphi\big)_{[N_k]}(f)(z)\neq f(z_0).$$

(ii)$\Rightarrow$(i)
Let $f\in A(\D)$. Given $\varepsilon>0$ we take $0<\delta<1$ such that $\|f\|_\infty<\frac{\varepsilon}{4\delta}$. Let $U$ be a neighborhood of $z_0$ such that $|f(z)-f(z_0)|<\frac{\varepsilon}{2}$ for all $z\in U$.

Fix $z\in \partial\D$. We claim that there is $N_\delta\in\N$ such that for every $N\ge N_\delta$ we have
$$
\frac{\# \{n\in\N\,:\,\varphi^n(z)\in U,\, n\le N \}}{N}\ge 1-\delta.
$$
If the claim is not true, there is an increasing sequence $(N_k)_k$ of  natural numbers such that
$$
\frac{\# \{n\in\N\,:\,\varphi^n(z)\in U,\, n\le N_k \}}{N_k}< 1-\delta, \quad k\in\N,
$$
which implies that $\underline{\mbox{dens}}\, \N_{U}^{z\varphi}\le 1-\delta$, contradicting the hypothesis.

Now, let denote $\ell_N:=\# \{n\in\N\,:\,\varphi^n(z)\in U,\, n\le N \}$ for all $N\in\N.$ For all $N\ge N_\delta$ we have, by the claim, that $\frac{N-\ell_N}{N}\le \delta$, and then
\begin{eqnarray*}
\big|\big(C_\varphi\big)_{[N]} f(z)-f(z_0)\big|&\le & \frac{\ell_N \sup_{z\in U}|f(z)-f(z_0)|}{N}+\frac{N-\ell_N}{N}\cdot 2\sup_{z\in\D}|f(z)|\\
&\le & \frac{\varepsilon}{2}+2\delta\|f\|_\infty<\varepsilon.
\end{eqnarray*}

Thus $((C_\varphi\big)_{[n]} f)_n$ is a bounded sequence which is pointwise  convergent to $f(z_0)$, and then we have weak convergence. By the same argument used in Proposition \ref{sc} the operator is mean ergodic.

(i)$\Rightarrow$(iii) follows from Remark \ref{nc} and the definition of mean ergodicity, considering the functions $f(z)=z^j,$ $j\in \N.$

(iii)$\Rightarrow$(i) The hypothesis implies that $\lim_{n\to \infty} \big(C_\varphi\big)_{[n]}=C_{z_0}$ on the monomials.  Therefore, as $\T$ is power bounded and the polynomials are dense on $A(\D),$ the composition operator is mean ergodic.
\end{proof}

\begin{remark}\label{orbitaseparada}
According to the definition of  density of an orbit, it is clear that for $\varphi\in A(\D)$ with Denjoy-Wolff point $z_0\in \partial \D$ and  such that there exists a point in the boundary  whose orbit does not intersect a neighbourhood of $z_0,$ then $C_{\varphi}$ is not mean ergodic.  This certainly happens when the symbol has another fixed or  periodic point in the boundary.
\end{remark}

%
%
%
%

\begin{theorem}\label{hyperbolic}
Let $\varphi:\overline{\D}\rightarrow \overline{\D},$ $\varphi\in A(\D),$ be a hyperbolic symbol with $z_0\in \partial \D$ as  Denjoy-Wolff point. Assume $\varphi$ is holomorphic in a neighbourhood of $z_0$.   The following are equivalent on $A(\D):$
\begin{enumerate}
\item[(i)] $C_{\varphi}$ is mean ergodic on $A(\D)$.
\item[(ii)]   $\lim_n \varphi^{n}(z)=z_0$ for all $z\in\overline{\D}.$
\end{enumerate}
\end{theorem}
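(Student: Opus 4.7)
The implication (ii)$\Rightarrow$(i) is immediate from Proposition \ref{sc}, so my work focuses on (i)$\Rightarrow$(ii). The Denjoy--Wolff theorem already gives $\varphi^n(z)\to z_0$ for every $z\in\D$, so the issue reduces to establishing the convergence at each boundary point.

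The plan is to argue by contradiction, combining two ingredients: a local contraction at $z_0$ coming from hyperbolicity plus holomorphy, and the density condition supplied by Theorem \ref{me_parabolic}. First I would extract the local contraction. Because $\varphi$ is holomorphic in a neighbourhood of $z_0$, the fixed-point identity $\varphi(z_0)=z_0$ holds and the angular derivative at $z_0$ coincides with the ordinary derivative $\alpha:=\varphi'(z_0)$; hyperbolicity then gives $|\alpha|<1$. Fixing any $\beta\in(|\alpha|,1)$, a Taylor expansion produces $\varepsilon>0$ such that
\[
|\varphi(z)-z_0|\le \beta\,|z-z_0|\qquad\text{for all } z\in B(z_0,\varepsilon).
\]
Setting $V:=B(z_0,\varepsilon)\cap\overline{\D}$, this forces $\varphi(V)\subseteq V$, and iterating the estimate shows $\varphi^n(z)\to z_0$ geometrically for every $z\in V$.

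Next I would close the argument. Suppose some $w\in\partial\D$ satisfies $\varphi^n(w)\not\to z_0$. If even a single iterate $\varphi^m(w)$ landed in $V$, then the forward invariance and the contraction of $V$ would force the whole tail of the orbit to converge to $z_0$, contradicting the choice of $w$. Hence $\varphi^n(w)\notin V$ for every $n$, so $\N_V^{w,\varphi}=\emptyset$ and $\mathrm{dens}\,\N_V^{w,\varphi}=0$. On the other hand, mean ergodicity of $C_\varphi$ on $A(\D)$ combined with Theorem \ref{me_parabolic} forces $\mathrm{dens}\,\N_V^{w,\varphi}=1$, a contradiction.

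The only place where the hypotheses really do any work is in producing the contracting neighbourhood $V$; the hard part is therefore the identification of the angular derivative with the ordinary derivative at $z_0$ together with the bound $|\varphi'(z_0)|<1$ coming from hyperbolicity. These facts are standard, but they are precisely the bridge that converts the ergodic-theoretic hypothesis into a usable dynamical statement. Once $V$ is in hand, the density argument is routine.
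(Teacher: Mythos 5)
Your proposal is correct and follows essentially the same route as the paper: both directions are handled identically, with (ii)$\Rightarrow$(i) from Proposition \ref{sc} and (i)$\Rightarrow$(ii) by using holomorphy at $z_0$ together with $\varphi'(z_0)<1$ to produce a contracting neighbourhood $V$ of $z_0$, concluding that an orbit not converging to $z_0$ never meets $V$ and so violates the density criterion of Theorem \ref{me_parabolic} (the paper packages this last step as Remark \ref{orbitaseparada}). Your write-up merely makes explicit the forward invariance of $V$ and the identification of the angular derivative with the ordinary derivative, both of which the paper leaves implicit.
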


\begin{proof}
(ii)$\Rightarrow$(i) follows from Proposition \ref{sc}.\\
(i)$\Rightarrow$(ii) Assume that there exists $z_1\in \partial\D$ such that $ (\varphi^{n}(z_1))_n$ does not converge to $z_0.$ From the hypothesis it follows that there exists $0<\rho<1$ and $r>0$  such that $|\varphi(z)-\varphi(z_0)|<\rho |z-z_0|$ for every $z\in B(z_0,r).$ Hence $(\varphi^{n}(z))_n$ converges to $z_0$ for every $z\in B(z_0,r)$.
Thus,  $\{\varphi^n(z_1):\ n\in\mathbb{N}\}\cap B(z_0,r)=\emptyset.$ Hence, by Remark \ref{orbitaseparada},  $\T$ cannot be mean ergodic.
\end{proof}

We find examples of different character, also for the parabolic case. In the next propositions we obtain that condition $(*)$ is equivalent to $(**)$  in the case that the symbol is a linear fractional transformation  or a
finite Blaschke product.

Recall that a \emph{linear fractional transformation} (LFT) is a transformation of the extended complex plane $\hat{\mathbb{C}}=\mathbb{C}\cup \{ \infty \}$ of the form
$$\zeta \mapsto \frac{a \zeta+ b}{c\zeta + d},\  \zeta \in \hat{\mathbb{C}}, \ ad-cb\neq 0,\ a,b,c,d \in \mathbb{C}.
$$
It is well-known that the linear fractional transformations are precisely the conformal mappings of $\hat{\mathbb{C}}$ and that every linear fractional transformation except the identity  has one or two fixed points.

\begin{proposition}\label{LFT}
Let $\varphi:\overline{\D}\rightarrow \overline{\D},$ $\varphi\in A(\D),$ be a LFT  different from an elliptic automorphism. The following are equivalent:
\begin{enumerate}
\item[(i)] $\T$ is mean ergodic on $A(\D).$
\item[(ii)] $\varphi$ is not a hyperbolic automorphism.
\end{enumerate}
\end{proposition}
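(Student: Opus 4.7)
The plan is to prove (i)$\Leftrightarrow$(ii) separately, reducing each direction to Proposition \ref{sc} or Remark \ref{orbitaseparada} via a structural analysis of the fixed points of the LFT $\varphi$. As the surrounding discussion sits inside the framework of Theorem B1, I treat the case where $\varphi$ does not have an interior Denjoy--Wolff point, so its Denjoy--Wolff point $z_0$ lies on $\partial\D$.

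For (i)$\Rightarrow$(ii) I argue the contrapositive. A hyperbolic automorphism $\varphi$ has, in addition to $z_0$, a second (repelling) boundary fixed point $z_1 \in \partial\D \setminus \{z_0\}$. The orbit $(\varphi^n(z_1))_n$ is constant at $z_1$, so its intersection with any neighborhood of $z_0$ not containing $z_1$ is empty; Remark \ref{orbitaseparada} then shows that $C_\varphi$ is not mean ergodic.

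For (ii)$\Rightarrow$(i), assume $\varphi$ is neither an elliptic nor a hyperbolic automorphism. If $\varphi$ is a parabolic automorphism, a Cayley transform sending $z_0$ to $\infty$ conjugates $\varphi$ to a real translation $w \mapsto w + a$ of the upper half-plane; the iterates $w + na$ tend to $\infty$ for every $w$ in the closed upper half-plane, so pulling back gives $\varphi^n(z) \to z_0$ for every $z \in \overline{\D}$, and Proposition \ref{sc} supplies the mean ergodicity. Otherwise $\varphi$ is a non-automorphism LFT whose Denjoy--Wolff point $z_0$ lies on $\partial\D$. Let $z_1 \in \hat{\C}$ denote its second fixed point (collapsing to $z_0$ in the parabolic sub-case). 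The multiplier identity $\varphi'(z_0)\varphi'(z_1)=1$, together with the Denjoy--Wolff bound $\varphi'(z_0)\le 1$, forces $z_1$ to be repelling. The crux is to show $z_1 \notin \overline{\D}$: since $\varphi$ preserves generalized circles, $\varphi(\partial\D)$ is a circle through $z_0$ and through $z_1$ if $z_1 \in \partial\D$, and an elementary case analysis of the two circles $\partial\D$ and $\varphi(\partial\D)$ shows that $\varphi(\D)\subseteq\D$ combined with $z_1 \in \partial\D$ forces $\varphi(\partial\D)=\partial\D$, hence $\varphi$ to be an automorphism, contradicting our hypothesis. With $z_1$ off $\overline{\D}$, the attractor--repeller dynamics of a non-trivial LFT yield $\varphi^n(z) \to z_0$ for every $z \in \hat{\C}\setminus\{z_1\}\supseteq\overline{\D}$, and a final application of Proposition \ref{sc} completes the proof.

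The main obstacle I anticipate is the geometric step locating $z_1$ outside $\overline{\D}$ via the circle-preserving property, especially comparing the transversal and tangential configurations of $\partial\D$ and $\varphi(\partial\D)$. A minor but necessary aside is ruling out non-automorphism parabolic LFT self-maps of $\D$: moving the double fixed point to $\infty$ via a Cayley-type conjugation, any LFT self-map of the upper half-plane fixing $\infty$ doubly must be a real translation, hence already an automorphism.
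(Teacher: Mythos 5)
Your reduction of (i)$\Rightarrow$(ii) to Remark \ref{orbitaseparada}, and your treatment of parabolic automorphisms and of non-automorphisms with two distinct fixed points, are sound. Your restriction to a boundary Denjoy--Wolff point is also the right reading of the statement: it is the standing assumption of the subsection, and without it the proposition would be false ($\varphi(z)=z/(2-z)$ is an LFT with interior Denjoy--Wolff point $0$ and a boundary fixed point at $1$, so $C_\varphi$ is not mean ergodic by Theorem \ref{interiorDW_Ad}, although $\varphi$ is not a hyperbolic automorphism). The paper handles the non-automorphism case by a different, more uniform argument: $\varphi(\partial\D)$ is a circle distinct from $\partial\D$ that can meet $\partial\D$ only at $z_0$, so $\varphi$ maps $\overline{\D}$ into $\D\cup\{z_0\}$ and the Denjoy--Wolff theorem at once gives $\varphi^n(z)\to z_0$ on all of $\overline{\D}$. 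Your route through the second fixed point and the reciprocal multipliers is a legitimate alternative for the sub-case where that second fixed point exists (note that excluding $z_1\in\D$ also needs a word, e.g.\ Schwarz--Pick forbids a repelling interior fixed point).

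The genuine problem is your closing ``aside''. It is false that every LFT self-map of the upper half-plane fixing $\infty$ doubly is a real translation: $w\mapsto w+b$ with ${\rm Im}\, b>0$ maps the half-plane into the strictly smaller half-plane $\{{\rm Im}\, w>{\rm Im}\, b\}$, fixes only $\infty$ with multiplier $1$, and is therefore a parabolic \emph{non}-automorphism self-map; its Cayley conjugate is a non-automorphism parabolic LFT in $A(\D)$ (the classical parabolic non-automorphisms of Bourdon--Shapiro). For such a symbol your main argument does not apply: the unique fixed point is $z_0\in\partial\D$ with $\varphi'(z_0)=1$, so there is no repelling fixed point to locate outside $\overline{\D}$ and no attractor--repeller dynamics to invoke. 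This case is not vacuous and must be treated. Fortunately your own translation argument for parabolic automorphisms repairs it verbatim: $w+nb\to\infty$ for every $w$ in the closed half-plane whenever $b\neq 0$ and ${\rm Im}\, b\geq 0$, giving $\varphi^n(z)\to z_0$ on $\overline{\D}$ and hence mean ergodicity via Proposition \ref{sc}. As written, however, the proof disposes of a genuinely occurring case by asserting a wrong statement.
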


\begin{proof}
(i)$\Rightarrow$(ii) If  $\varphi$ is a hyperbolic automorphism,  the symbol has a repulsive fixed point different from the Denjoy-Wolff point $z_0.$ So, by Remark~\ref{orbitaseparada}, $\T$  is not mean ergodic. \\
(ii)$\Rightarrow$(i) If $\varphi$ is a parabolic automorphism we have that $(\varphi^n(z))_n$ converges to $z_0$ for every $z\in \overline{\D}$ \cite[Proposition 4.47]{GE_Peris}. So, by Proposition \ref{sc} we get that $\T$ is mean ergodic. We also have this situation if $\varphi$ is not an automorphism. Observe that in this case we have that $\varphi$ is not a Blaschke product, and so, $\varphi(\partial \D)\not\subseteq  \partial \D.$ Since $\varphi$ is a conformal map, it maps $\partial\D$ to a circle different from $\partial\D.$ This implies that the circle $\varphi(\partial \D)$ must intersect $\partial\D$ only in the Denjoy-Wolff point, otherwise $\varphi(\partial \D)= \partial \D$ (see \cite[p.71-72]{Hadamard}). Applying now the Denjoy-Wolff theorem, we get that $\varphi^n(z)$ converges to $z_0$ for every $z\in \overline{\D}.$ Again Proposition \ref{sc} yields the mean ergodicity.
\end{proof}

Thanks to the  Linear-Fractional Model Theorem \cite[Theorem 0.4]{bourdon_shapiro} we get that for every univalent symbol $\varphi$ which is not a hyperbolic automorphism, $\T$ is mean ergodic. Observe that in this case, $\varphi$ is conjugated to a LFT.

\begin{proposition}\label{Blaschke}
For  $\varphi:\overline{\D}\rightarrow \overline{\D},$ $\varphi\in A(\D)$ a finite Blaschke product different from an elliptic automorphism, the following are equivalent:
\begin{enumerate}
\item[(i)] $C_{\varphi}$ is mean ergodic on $A(\D).$
\item[(ii)] $\varphi$ is a parabolic automorphism.
\end{enumerate}
\end{proposition}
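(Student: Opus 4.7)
The plan is to split by the degree of the Blaschke product $\varphi$ and reduce each case to a previously proved result. The implication (ii) $\Rightarrow$ (i) is immediate from Proposition \ref{LFT}: a parabolic automorphism is an LFT that is neither elliptic nor hyperbolic, so that proposition gives $C_\varphi$ mean ergodic on $A(\D)$.

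For (i) $\Rightarrow$ (ii), suppose $C_\varphi$ is mean ergodic. If $\varphi$ is a Blaschke product of degree one, it is an automorphism of $\D$; by hypothesis it is not elliptic, and Proposition \ref{LFT} rules out the hyperbolic case, so $\varphi$ must be parabolic, as desired.

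It remains to derive a contradiction when $\varphi$ is a finite Blaschke product of degree $n \geq 2$. Such a $\varphi$ is not an automorphism, but still satisfies $\varphi(\partial\D) = \partial\D$, being a degree-$n$ covering map of the circle. First, if the Denjoy-Wolff point lies in $\D$, pick any $z \in \partial\D$; then $\varphi^m(z) \in \partial\D$ for every $m$, so $\varphi^m(z)$ cannot converge to the interior Denjoy-Wolff point. By Theorem \ref{interiorDW_Ad}, $C_\varphi$ is not mean ergodic, contradicting (i). Second, if the Denjoy-Wolff point $z_0$ lies on $\partial\D$, we use the fact invoked in the proof of Theorem \ref{interiorDW_Ad}: the Julia set of $\varphi$ coincides with $\partial\D$ and contains a dense set of repelling periodic points. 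Pick any periodic point $w \in \partial\D$ with $w \neq z_0$; its forward orbit is a finite subset of $\partial\D \setminus \{z_0\}$, hence avoids some neighborhood of $z_0$. By Remark \ref{orbitaseparada}, $C_\varphi$ is not mean ergodic, again contradicting (i).

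The main subtlety lies in the boundary Denjoy-Wolff subcase of degree $\geq 2$: one must know that $\varphi$ possesses a periodic point on $\partial\D$ distinct from $z_0$. This is exactly where the classical Julia set description enters, but since the paper has already invoked it in the proof of Theorem \ref{interiorDW_Ad}, the present argument requires no new dynamical ingredient and assembles entirely from previously established pieces.
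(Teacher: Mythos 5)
Your proposal is correct and follows essentially the same route as the paper: (ii)$\Rightarrow$(i) and the degree-one case via Proposition \ref{LFT}, and the degree $\geq 2$ case via the existence of a periodic point on $\partial\D$ away from the Denjoy-Wolff point, concluding with Remark \ref{orbitaseparada} (your explicit treatment of the interior Denjoy-Wolff subcase through Theorem \ref{interiorDW_Ad} is a slightly more careful bookkeeping than the paper's, since Remark \ref{orbitaseparada} is stated only for boundary Denjoy-Wolff points). One small imprecision: in the boundary subcase you assert that the Julia set equals $\partial\D$, quoting the fact used in the proof of Theorem \ref{interiorDW_Ad}, but that statement is established there for Blaschke products with an interior fixed point; for a non-automorphism finite Blaschke product with boundary Denjoy-Wolff point the Julia set may instead be a Cantor subset of $\partial\D$ (this is the Carleson--Gamelin result the paper cites), which still yields infinitely many repelling periodic points and hence one different from $z_0$, so your argument goes through after correcting the citation.
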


\begin{proof}
In Proposition \ref{LFT} we have seen that if $\varphi$ is a parabolic automorphism, then $\T$ is mean ergodic, and if $\varphi$ is a hyperbolic automorphism, then it is not. So, it is enough to prove that if $\varphi$ is a finite Blaschke product which is not an automorphism, $\T$ is not mean ergodic. By \cite[Example p.58]{carleson_gamelin} we get that for symbols of this type, the Julia set is $\partial \D$ or a Cantor set of $\partial \D.$ Since the Julia set is the closure of the repelling periodic points, $\varphi$ must have a periodic point different from $z_0.$ We conclude now by Remark \ref{orbitaseparada}.
\end{proof}

%
%
%
%
%
%
%
%

\section*{Appendix}
In this section, we will consider composition operators defined in the weighted Banach spaces of analytic functions $H^{\infty}_v$ defined as follows:
$$
H_v^{\infty}=H_v^{\infty}(\mathbb{D}):= \{ f\in H(\mathbb{D}):\|f \|_v=\sup_{z \in \mathbb{D}} v(z) |f(z)| < \infty  \},
$$
$$
H_v^{0}=H_v^{0}(\mathbb{D}):= \{ f\in H_v^{\infty}(\mathbb{D}): \lim_{|z|\to 1} v(z) |f(z)|=0\},
$$
endowed with the norm $\parallel \cdot \parallel_v$, where   $v: \D \to \mathbb{R}_{+}$ is an arbitrary \emph{weight}, that is, a  bounded continuous positive function. On account, if the weight is  radial (that is, $v(z)=v(|z|))$ for all $z \in \D$, non-increasing with respect to $|z|$ and   $\lim_{|z|\to1} v(z)=0$, the weight is called \emph{typical}.
From now on, let us assume  that $v$ is a typical weight. By \cite[Theorem 1.1]{Lusk},
$H_v^{\infty}$ is isomorphic to $\ell^{\infty}$ or to $H^{\infty}(\D),$
both Grothendieck Banach spaces with the Dunford-Pettis property. Then, by Lotz \cite{Lotz} we get that $\T$ is mean ergodic if and only if it is uniformly mean ergodic.

In \cite{Elke}, Wolf shows Theorem \ref{irracionalsElke}(i) below and  asks if for $\lambda\in \C,$  $|\lambda|=1,$ not a root of unity, the composition operator $C_{\varphi},$ $\varphi(z)=\lambda z,$ $z\in \D,$ is (uniformly) mean ergodic on  $H^{\infty}_v,$ where $v$ is a  typical weight on $\D$.  In Theorem \ref{irracionalsElke}(ii)  we solve this question in the negative by proving that this is not true in general for every weight $v.$

\begin{theorem}\label{irracionalsElke}
Let $v$ be a typical weight on $\D$.   The composition operator $C_{\varphi}$ associated to  $\varphi(z)=\lambda z,$ $z\in \D,$ $\lambda\in \C$ with $|\lambda|=1$, is power bounded on $H^{\infty}_v$ and satisfies:
\begin{itemize}
\item[(i)]  If there exists $k\in \N$ such that $\lambda^k=1$ (consider the smallest $k$), $C_{\varphi}$ is uniformly mean ergodic on $H^{\infty}_v$ with $\lim_n{(C_{\varphi})}_{[n]}(f)=\frac{1}{k}\sum_{m=0}^{k-1}f(\lambda^mz)$ for every $f\in H_v^{\infty}.$

\item[(ii)]  If $\lambda$ is not a root of unity, $C_{\varphi}$ is mean ergodic on $H^0_v$ with $\lim_{n\to\infty} {(C_{\varphi})}_{[n]}=C_0$ for every weight $v$.\\
Fix  $0<\alpha<1,$ $R>1$ and take $(n_k)_k\subseteq \N,$ $n_k\geq k,$ such that $|1-\lambda^{n_k}|\leq \frac{1}{R^k}$ for every $k\in \N.$ For the typical weight $$v_{\alpha}(r)=\left\{
\begin{array}{ll}
  C \left(\sum_{k=1}^{\infty}r^{n_k}\right)^{-\alpha},  & r_0\leq r<1, \\
  1, & r\leq r_0,
\end{array}
\right. $$ where $C= \left(\sum_{k=1}^{\infty}r_0^{n_k}\right)^{\alpha}$,
$C_{\varphi}$ is not uniformly mean ergodic on $H^0_{v_\alpha},$  thus not mean ergodic on $H^{\infty}_{v_\alpha}.$
\end{itemize}
\end{theorem}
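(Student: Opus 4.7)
My plan is to mirror the proof of Theorem~\ref{rotacioAD}, adapted to the weighted setting. Radiality of $v$ gives $v(\lambda z) = v(z)$, so $\|C_\varphi f\|_v = \|f\|_v$ for every $f$; hence $C_\varphi$ is an isometry, and in particular power bounded. For part (i), $\lambda^k = 1$ yields $C_\varphi^k = I$, and the computation in Theorem~\ref{rotacioAD}(i) transcribes verbatim to $H_v^\infty$: periodicity forces uniform convergence of the Ces\`aro means to $f \mapsto \frac{1}{k}\sum_{m=0}^{k-1} f(\lambda^m z)$. For the first half of part (ii), the estimate
$$\left\|\frac{1}{n}\sum_{m=1}^{n} \lambda^{mj}z^j\right\|_v \leq \frac{2}{n|1-\lambda^j|}\|z^j\|_v \to 0,\qquad j\geq 1,$$
together with $(C_\varphi)_{[n]}(1) = 1 = C_0(1)$, shows that $(C_\varphi)_{[n]}(p) \to C_0(p)$ in norm for every polynomial $p$. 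Since polynomials are dense in $H_v^0$ and the sequence $((C_\varphi)_{[n]})_n$ is equicontinuous, Lemma~\ref{equidenstendszero} promotes this to convergence on all of $H_v^0$.

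For the failure of uniform mean ergodicity on $H_{v_\alpha}^0$, I argue by contradiction. Assuming uniform mean ergodicity, Lin's theorem (applicable since $\|C_\varphi^n\|/n\to 0$) forces $\mathrm{Im}(I - C_\varphi)$ to be closed; combined with the Ces\`aro limit $C_0$ from the previous step, this gives
$$\mathrm{Im}(I - C_\varphi) = \mathrm{Ker}(C_0) = \{h \in H_{v_\alpha}^0 : h(0) = 0\}.$$
As in Theorem~\ref{rotacioAD}(ii), the test function is the lacunary series $f(z) = \sum_{k=1}^{\infty}(1-\lambda^{n_k})z^{n_k}$. Its membership in $H_{v_\alpha}^0$ follows from $|f(z)| \leq \sum_{k=1}^\infty R^{-k} |z|^{n_k} \leq 1/(R-1)$ together with $v_\alpha(r)\to 0$, and $f(0)=0$ is clear. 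If $f = g - C_\varphi g$ for some $g \in H_{v_\alpha}^0$, comparing Taylor coefficients and using $\lambda^j \neq 1$ for $j \geq 1$ forces $g(z) = g(0) + \sum_{k=1}^\infty z^{n_k}$. Evaluating on the positive real axis for $r \in (r_0, 1)$,
$$v_\alpha(r)|g(r)| \geq C\Bigl(\sum_{k=1}^\infty r^{n_k}\Bigr)^{-\alpha}\Bigl(\sum_{k=1}^\infty r^{n_k} - |g(0)|\Bigr),$$
which tends to $+\infty$ as $r \to 1^-$ because $1-\alpha > 0$ and $\sum r^{n_k} \to \infty$, contradicting $g \in H_{v_\alpha}^\infty$. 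The final implication is then automatic: mean ergodicity on the Grothendieck Dunford--Pettis space $H_{v_\alpha}^\infty$ would, by Lotz, promote to uniform mean ergodicity on $H_{v_\alpha}^\infty$, which restricts (by the $C_\varphi$-invariance of the closed subspace $H_{v_\alpha}^0$) to uniform mean ergodicity on $H_{v_\alpha}^0$, contradicting what was just established.

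The main obstacle is the calibration of the weight. The choice $v_\alpha(r) \sim (\sum_k r^{n_k})^{-\alpha}$ with $0 < \alpha < 1$ is engineered so that the geometrically decaying coefficients of $f$ produce a uniformly bounded series against which the decay of $v_\alpha$ suffices, placing $f \in H_{v_\alpha}^0$; simultaneously, the would-be preimage $g$ has partial sums comparable to $\sum r^{n_k}$, and the surplus factor $(\sum r^{n_k})^{1-\alpha}$ forces $g$ outside $H_{v_\alpha}^\infty$. I also need to verify that $v_\alpha$ is a typical weight: continuity at $r_0$ is built into the normalising constant $C$, monotonicity comes from $r \mapsto \sum r^{n_k}$ being increasing on $(0,1)$, and $v_\alpha(r)\to 0$ follows from $\sum_{k=1}^{K} r^{n_k} \to K$ as $r\to 1^-$ for every fixed $K$. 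Beyond this balancing act, the argument is essentially a weighted transcription of the $H^2$-obstruction in the proof of Theorem~\ref{rotacioAD}(ii).
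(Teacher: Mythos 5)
Your proof is correct and follows essentially the same route as the paper: the isometry argument for power boundedness, periodicity for (i), density of polynomials plus equicontinuity for mean ergodicity on $H_v^0$, and the Lin-theorem obstruction via the lacunary series $f(z)=\sum_k(1-\lambda^{n_k})z^{n_k}$ whose would-be preimage $g$ fails to lie in $H_{v_\alpha}^{\infty}$, finishing with Lotz on the Grothendieck--Dunford--Pettis space. Your handling of the free constant term $g(0)$ and the explicit check that $v_\alpha$ is a typical weight are slightly more careful than the paper's, but the argument is the same.
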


\begin{proof}
Since the weight is radial, $\|C_{\varphi}^nf\|_v=\|f\|_v$ for every $f\in H^{\infty}_v,$ $n\in \N,$ and thus, $\T$ is power bounded. (i) follows since $\T$ is periodic (see \cite[Proposition 18]{Elke} for the standard argument). \\
(ii) Proceeding as in the proof of Theorem \ref{rotacioAD} we get that for $\lambda$ not a root of unity, $\lim_{n\to\infty} {(C_{\varphi})}_{[n]}$ equals $C_0$ on the monomials. As $C_{\varphi}$  is power bounded and the polynomials are dense in $H_v^0$ (see \cite[Theorem 1.5]{BBG}), the operator is  mean ergodic on $H^0_v.$\\
Now, let us see that  for the weight $v_{\alpha},$ the operator $C_{\varphi}$ is not uniformly mean ergodic on $H^0_{v_\alpha}.$

 By \cite[Theorem]{lin},  it is enough to show that $${\rm Im}(I-C_{\varphi})\neq \{f\in H^0_{v_\alpha}: \lim_{n\to\infty} {(C_{\varphi})}_{[n]}(f)=0\}=\{f\in H^0_{v_\alpha}: C_0(f)=f(0)=0\}.$$
Take $f(z)=\sum_{k=1}^{\infty}(1-\lambda^{n_k})z^{n_k}.$
Since
\begin{eqnarray*}
v(z)|f(z)|&=&v(z)\left|\sum_{k=1}^{\infty}(1-\lambda^{n_k})z^{n_k}\right|\leq v(z) \sum_{k=1}^{\infty}|1-\lambda^{n_k}| |z|^{n_k}\\
&\leq & v(z)\sum_{k=1}^{\infty}\frac{1}{R^k}= v(z) \left(\frac{R}{R-1}\right) \longrightarrow 0 \ \text{ as } \ |z|\rightarrow 1,
\end{eqnarray*}
we get that $f\in H_v^0$ with $f(0)=0.$ But $f\notin {\rm Im}(I-C_{\varphi}).$ Observe that if there exists $g\in H_{v_\alpha}^0$ such that $f(z)=g(z)-g(\lambda z),$  proceeding as in the proof of Theorem \ref{rotacioAD}, we get  $g(z)=\sum_{k=1}^{\infty}z^{n_k}.$
But $g\notin H_{v_\alpha}^0$ since
$$v_\alpha(r)|g(r)|=\left(\sum_{k=1}^{\infty}r^{n_k}\right)^{1-\alpha}$$
does not converge to 0 as $r \rightarrow 1.$ As a consequence, $C_{\varphi}$ cannot be uniformly mean ergodic on $H^0_{v_\alpha},$ neither on $H_{v_\alpha}^{\infty}.$
\end{proof}

\noindent
\textbf{Acknowledgements}

This research was partially supported by  MINECO,  Project
 MTM2013-43540-P. The research of the last three authors was partially supported by Project Programa de Apoyo a la Investigaci\'{o}n
y Desarrollo de la UPV PAID-06-12.

\end{document}